\newtheorem{theorem}{Theorem}[section]
\newtheorem{corollary}[theorem]{Corollary}
\newtheorem{definition}[theorem]{Definition}
\newtheorem{lemma}[theorem]{Lemma}
\newtheorem{proposition}[theorem]{Proposition}
\newtheorem*{thm:edges}{Theorem \ref{thm:edges}}
\newtheorem*{thm:triangle}{Theorem \ref{thm:triangle}}
\newtheorem*{cor:triangle}{Corollary \ref{cor:triangle}}
\newcommand{\GarLe}{Garoufalidis and L\^{e} }
\title{Higher Order Stability in the Coefficients of the Colored Jones Polynomial}
\author{Katherine Walsh}
\address{Department of Mathematics, University of Arizona, 
Tucson, AZ 85721}
\email{k3walsh@math.arizona.edu}
\begin{document}
\maketitle

\begin{abstract}
We discuss the higher order stabilization of the coefficients of the colored Jones polynomial. In particular, we find an expression for the second stable sequence of the colored Jones polynomial of a certain class of knots.  We also determine which knots have the same higher order stability. 
\end{abstract}


\section{Introduction}
The colored Jones polynomial is a knot invariant that assigns to each knot $K$ a sequence of Laurent polynomials $\{J_{N,K}(q)\}.$ An important open question is how to relate the colored Jones polynomial of a knot to the knot's geometry.  One such relation is the hyperbolic volume conjecture of Kashaev, Murakami and Murakami \cite{MurVol}, which states that one can find the hyperbolic volume of a knot's complement by evaluating $J'_{N,K}(q)$ (the appropriately normalized colored Jones polynomial) at an $N$th root of unity and then taking a particular limit as $N$ goes to infinity. Specifically,  

 \begin{equation}
 2\pi\lim_{N \rightarrow \infty} \frac{\log |J'_{N,K}(e^{2\pi i/N})|}{N}=\text{vol}(S^3\backslash K).
 \end{equation} 

The hyperbolic volume conjecture has been proven for various knots, including the figure-eight knot, (see \cite{MurVol}) but is still open for many knots and links.

In \cite{Volish}, Dasbach and Lin related the first and last two coefficients of the original Jones polynomial of alternating, prime, non-torus knots to the the volume of the knot in the following way: Let
\begin{equation}
J'_{2,K}(q)= a_nq^n+ \cdots + a_m q^m
\end{equation} be the Jones polynomial of $K$. Then 
\begin{equation}
2v_8(\textrm{max}(|a_{m-1}|,|a_{n+1}|)-1) \leq \textrm{vol}(S^3 \backslash K) \leq 10v_3(|a_{n+1}|+|a_{m-1}|-1).
\end{equation}
Here, $v_3 \approx 1.0149416$ is the volume of an ideal regular hyperbolic tetrahedron and $v_8 \approx 3.66386$ is the volume of an ideal regular hyperbolic octahedron.

Dasbach and Lin also proved that the first two and last two coefficients of the Jones polynomial were the same as the first and last two coefficients of the $N$ colored Jones polynomial for all $N$ and noticed that the first and last $N$ coefficients of the $N$ colored Jones polynomial were the same, up to sign, as the first $N$ coefficients of the $k$ colored Jones polynomial for all $k>N$.   These theorems and observed patterns encourage us to look more deeply at the coefficients of the colored Jones polynomial to see what they can tell us about the knot.

Given a sequence of Laurent polynomials, the \emph{head} of this sequence exists if the highest $N$ coefficients (coefficients of the $N$ terms with the highest $q$ degree) of the $N${th} polynomial in the sequence are the same as the highest $N$ coefficients of the $k${th} polynomial for all $k\ge N$.  The \emph{tail} of the sequence of polynomials, if it exists, is the stabilized sequence of the coefficients of the lowest terms.

In \cite{Arm, DasArm}, Dasbach and Armond proved that the head and tail of the colored Jones polynomial exist for alternating and adequate knots and depend on the reduced checkerboard graphs of the knot diagrams.  
%

 For example, they show that for any knot whose B-checkerboard graph is a triangle graph, the highest coefficients stabilize to the pentagonal number sequence (expanded so that we have a maximum degree of 0). By this, we mean that for these knots, the highest $N$ coefficients of $J'_{N,K}$ are the same as the highest $N$ coefficients of  
 \begin{equation}
 T_0=\prod_{n=1}^\infty(1-q^{-n})=\sum_{k=-\infty}^{\infty}(-1)^kq^{\frac{-k}{2}(3k-1)}.
 \end{equation}

The knot $8_5$ has an A-checkerboard graph that is a triangle graph. This means that the mirror image of $8_5, \overline{8_5}$ will have a B-checkerboard graph which is a triangle graph. The table below lists the highest several coefficients of the N-colored Jones polynomial for the knot $ \overline{8_5}$ for $N=5, 6$ and 7. See the appendix for the calculation of the first $3N-2$ of these, the rest can be found on the Knot Atlas \cite{KnotAtlas}. We see that the highest $N$ coefficients of the $N$ colored Jones polynomial are the same as the highest $N$ coefficients of $T_0$.\\
\begin{table}[h!]
\begin{tabular}{lcccccccccccccccccccccc}
\hline
$T_0$	&1	& -1	& -1	& 0	&0	&1	& 0	& 1	& 0	& 0	& 0	& 0	& -1	& 0	& 0	&	$\cdots$ \\ 
\hline
$N=5$ 	&1	& -1	& -1	& 0	& 0	& 5	& -1	&  -3	&  -3	& -5	&11 & 4 &1 &-6 &17 &$\cdots$ \\ 
$N=6$ 	&1	& -1	& -1	& 0	& 0	& 1	& 4	& 0	& -4	& -3	& -3	& -1		& 9	& 8	& 1	&$\cdots$ \\ 
$N=7$ 	&1	& -1	& -1	& 0	& 0	& 1	& 0	&  5	& -1	& -4	& -3	& -3		&  0	&  -2	& 14	&	$\cdots$ \\ 
 \hline
\end{tabular}
 \end{table}

 Now, since we know all of $T_0$, we can subtract it from the shifted colored Jones polynomials.  Now the coefficients are:\\

\begin{table}[h!]
\begin{tabular}{lccccccccccccccccccccc}
\hline
$T_0$	&1	& -1	& -1	& 0	&0	&1	& 0	& 1	& 0	& 0	& 0	& 0	& -1		& 0	& 0	& $\cdots$ \\
\hline
$N=5$ 	&0	& 0	& 0	& 0	& 0	& 4	& -1	&-4	&  -3	& -5 & 11 &4 &2&-6 &17 &$\cdots$ \\ 
$N=6$ 	&0	& 0	& 0	& 0	& 0	& 0	& 4	& -1	& -4	& -3	& -3	& -1	& 10		& 8	& 1	& $\cdots$\\
$N=7$ 	&0	& 0	& 0	& 0	& 0	& 0	& 0	&  4	& -1	& -4	& -3	& -3	&  1		& -2	& 14 & $\cdots $\\
 \hline

\end{tabular}
\end{table}

Shifting these sequences back so that they start with a non-zero term, we can see that they again stabilize, but now only $N-1$ terms stabilize. We call the sequence they stabilize to $T_1$.\\

\begin{tabular}{lccccccccccccccccccc}
\hline
$T_1$	&4	& -1	& -4	& -3	&-3	&1	& 0	& 4	& 3	& 3	&$\cdots$ \\
\hline
$N=5$ 	& 4	& -1	&-4	&  -3	& -5	& 11	& 4	&2&-6&17&$\cdots$ 		& 	&\\
$N=6$ 	& 4	& -1	& -4	& -3	& -3	& -1	& 10		& 8	& 1	&-4 & $\cdots$ 	&\\
$N=7$ 	& 4	& -1	& -4	& -3	& -3	&  1		& -2	& 14 &7&1&	$\cdots$ \\
 \hline

\end{tabular}\\

After another subtraction and shifting (by $N-1$), we see that this pattern continues:\\

\begin{tabular}{lccccccccccccccccccc}
\hline
$T_2$	&-2	& 10	& 4	& -2	&-7	&-12&$\cdots$ \\
\hline
$N=5$ 	& -2	& 10	&4&-2&-9&14&$\cdots$ \\
$N=6$ 	& -2	& 10	& 4	& -2	&-7&-14&$\cdots$ \\
$N=7$ 	& -2	& 10	& 4	& -2	& -7	&-12&$\cdots$ \\
 \hline

\end{tabular}\\

Following a suggestion by Dasbach, we can also define this higher order stability another way. Instead of subtracting off the stabilized sequence, we can find $J_{N,K}-q^*J_{N+1,K}$, where the power of $q$ ensures that these polynomials have the same degree. Then with these differences, we can find the second order differences, and so forth. These sequences of differences also stabilize. We will show that their stability is equivalent to the stability we see above.

We call the sequence $T_1$ the ``neck" of the colored Jones polynomial. (The corresponding stable sequence in the lower degree terms is called the ``tailneck.") The main results in this paper relate to determining which knots will have the same higher order stability, or equivalently, what reductions can be done to a knot which do not change its higher order stability, and explicitly finding $T_1$ for three strand pretzel knots with negative twists. In particular, the main theorems are stated below.

Given a B-checkerboard graph of a knot, we might have multiple parallel edges corresponding to multiple negative twists in a region. We get a reduced graph by reducing all parallel edges down to a single edge.  We get an $m$-reduced graph by reducing $m+1$ or more parallel edges down to $m$ parallel edges.

\begin{theorem}
\label{thm:edges}
If $K_1$ and $K_2$ are alternating knots whose alternating diagrams have the same $m+1$-reduced B-checkerboard graph structure, then the highest (m+1)N coefficients (in $q$) of $J_{N,K_1}$ are the same as the first $(m+1)N$ coefficients of $J_{N,K_2}$, up to possible rescaling by $\pm 1$.
\end{theorem}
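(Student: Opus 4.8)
The plan is to build on the machinery that Armond and Dasbach developed for proving the existence of the head and tail of the colored Jones polynomial, and to track how much of the head is controlled by the reduced graph data.

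\medskip

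\noindent\textbf{Setup via the skein-theoretic / Kauffman bracket model.} First I would recall the standard reduction (following Armond and Dasbach/Garoufalidis--L\^e): for an adequate diagram, $J_{N,K}(q)$ can be computed from the reduced B-checkerboard graph $G$ by replacing each edge with an appropriate product of Chebyshev-type skein elements (the building blocks that arise when one expands each twist region using the Jones--Wenzl idempotent) and then taking a suitable inner product in the Temperley--Lieb algebra. The key point is that the contribution of a twist region with $c$ crossings, when expanded in the idempotent and reduced modulo higher-degree terms, depends only on $\min(c, \text{something})$ worth of data once one is working modulo $q^{M}$ for $M$ small relative to $N$. More precisely, passing from the full graph to its $m+1$-reduced graph — i.e.\ truncating every bundle of parallel edges to at most $m+1$ of them — changes the resulting skein element only in degrees that are at least $(m+1)N$ below the top. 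This is the engine of the whole argument.

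\medskip

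\noindent\textbf{Key steps.} (1) Normalize: shift $J_{N,K_i}$ so the top $q$-degree is $0$, so that ``highest $(m+1)N$ coefficients'' becomes ``coefficients of $q^0, q^{-1}, \dots, q^{-(m+1)N+1}$''; track the $\pm 1$ and power-of-$q$ ambiguity here, which accounts for the ``up to rescaling by $\pm1$'' in the statement (this comes from writhe/framing normalization differences between the two diagrams). (2) Express each $J_{N,K_i}(q)$ modulo $q^{-(m+1)N}$ (times the top monomial) as a skein-element pairing indexed by the B-checkerboard graph $G_i$, using adequacy to guarantee no cancellation in the top terms. (3) Prove the truncation lemma: in this pairing, replacing a bundle of $c \geq m+1$ parallel edges by a bundle of exactly $m+1$ parallel edges does not affect the answer modulo $q^{-(m+1)N}$. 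The mechanism is that each extra parallel strand, after inserting the Jones--Wenzl idempotent and using the defining relation $\Delta_{N} \cdot (\text{stuff})$, contributes a factor whose deviation from its leading term is $O(q^{-N})$; stacking $m+1$ of them already saturates the window of size $(m+1)N$, so further strands are invisible there. (4) Conclude: since $K_1$ and $K_2$ have the same $m+1$-reduced B-checkerboard graph, steps (2)--(3) give the same truncated skein element, hence the same top $(m+1)N$ coefficients up to the normalization ambiguity from step (1).

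\medskip

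\noindent\textbf{Main obstacle.} The technical heart is step (3) — making precise that each additional parallel edge ``costs'' a full $N$ in $q$-degree before it can affect the coefficients, and that these costs genuinely add up over a bundle rather than interacting in some way that lets a high-degree correction leak back up. This requires a careful degree count in the Temperley--Lieb / skein algebra: one needs the analogue, for bundles of edges, of the single-edge estimates in \cite{Arm, DasArm}, together with an argument that the pairing against the rest of the graph cannot raise the degree of a correction term. I expect this to hinge on an adequacy-type no-cancellation statement ensuring the leading terms of the graph pairing behave multiplicatively, so that a correction of order $q^{-(m+1)N}$ in one bundle stays of that order in the full invariant. Managing the bookkeeping of the $q$-power shifts (from writhe and from the Jones--Wenzl normalization $\Delta_N$) so that the final comparison is genuinely coefficient-by-coefficient, rather than just up to an uncontrolled monomial, is the other place where care is needed, and is presumably why the theorem is only claimed up to rescaling by $\pm1$.
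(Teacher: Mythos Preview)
Your overall strategy is right --- use the Armond--Dasbach skein machinery and track $q$-degrees --- but the key step (3) is both vaguer and harder than it needs to be, because you are not using the specific decomposition that makes the argument short.

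The paper does not think of a twist region as a stack of parallel strands whose corrections each cost $O(q^{-N})$ and then worry about whether those corrections interact. Instead, it applies fusion once across each negative twist region, yielding
\[
J_{N+1,K}=\sum_{j_1,\ldots,j_k=0}^{N}\prod_{i=1}^{k}\gamma(N,N,2j_i)^{m_i}\,\frac{\Delta_{2j_i}}{\theta(N,N,2j_i)}\,\Gamma_{N,(j_1,\ldots,j_k)},
\]
so that the \emph{entire} dependence on the crossing count $m_i$ in region $i$ sits in the single monomial factor $\gamma(N,N,2j_i)^{m_i}$. Armond and Dasbach's degree lemma (Lemma~\ref{gammalemma}) says $d(\gamma(N,N,2(N-1)))=d(\gamma(N,N,2N))+4N$; raising to the $m_i$th power turns this into a gap of $4Nm_i$ in $A$, i.e.\ $Nm_i$ in $q$. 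Hence for any region with $m_i\ge m+1$, every summand with $j_i<N$ is already at least $(m+1)N$ below the top term. The remaining summand has $j_i=N$ for all such $i$, and there $\gamma(N,N,2N)^{m_i}$ is literally a single monomial $\pm A^{\ast}$, so its exponent $m_i$ affects only the overall degree shift and sign, not the coefficient sequence. That is the whole proof: no ``stacking'' estimate, no interaction argument, and the $\pm1$ ambiguity is exactly the sign of this monomial.

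In short, your ``main obstacle'' dissolves once you pass to the fusion sum: the $m_i$-dependence is concentrated in a factor that is a pure monomial on the only surviving term. Your framing in terms of per-strand $O(q^{-N})$ deviations and possible leakage through the graph pairing is a plausible alternative route, but it is doing more work than necessary and would require you to reprove the degree control that the three Armond--Dasbach lemmas already give for free in the fusion variables $j_i$.
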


 \begin{figure}[ht]
\centering
 \includegraphics[width=.33\textwidth]{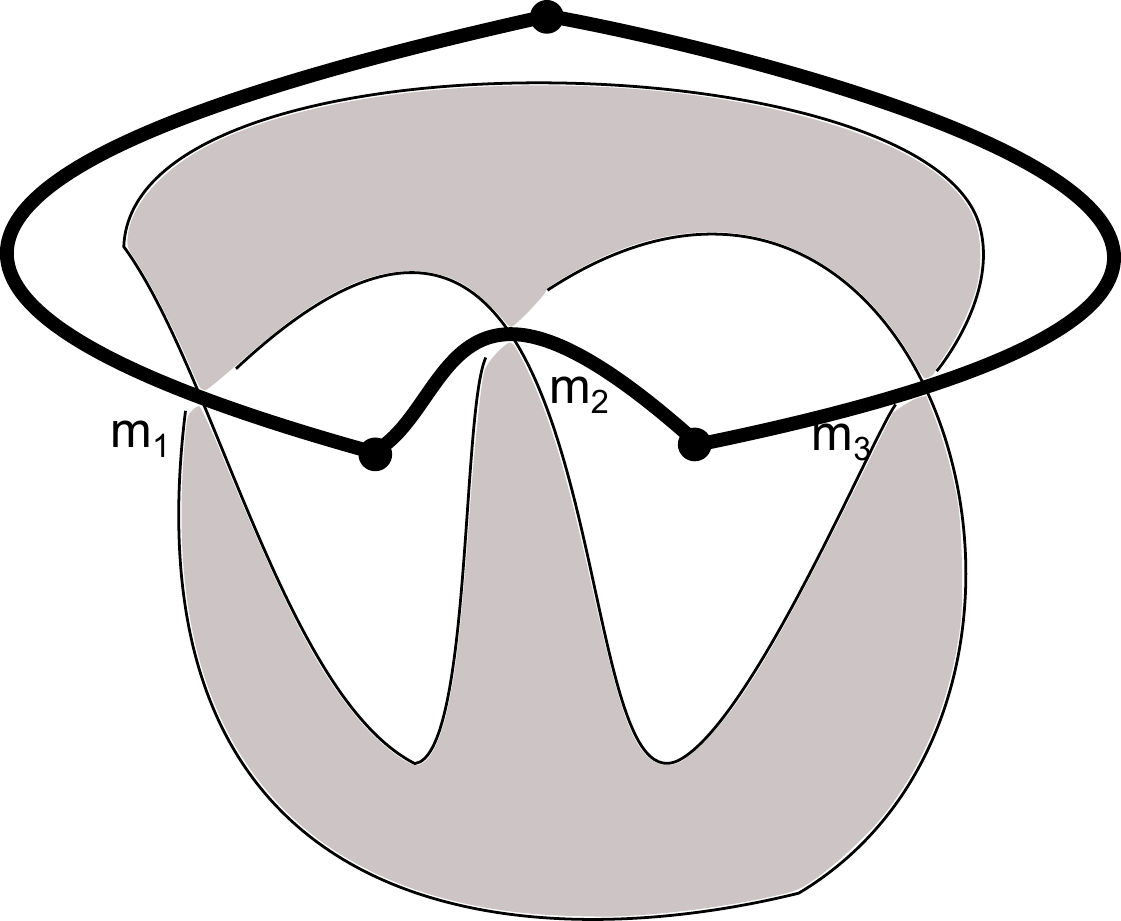}
  \caption{A trefoil knot with its checkerboard graph.}
 \label{treftail1}
 \end{figure}

Consider three strand pretzel knots with negative twists in each region. For knots in this family, the B-checkerboard graph is a triangle graph. These knots can be drawn like the trefoil in Fig. \ref{treftail1}, except we will have more crossings below the pictured crossings  (and thus more parallel edges before we reduce the graph).  Let $m_i$ represent the number of crossings in each section.

\begin{theorem}
  \label{thm:triangle}

  Let $m$ be the number edges in the checkerboard graph with $m_i$ of 2 or more.   The neck of knots whose reduced checkerboard graph is the triangle graph is:
\begin{equation} \prod_{n=1}^\infty(1-q^{-n})+m\frac{\prod_{n=1}^\infty(1-q^{-i})}{1-q^{-1}}, \end{equation}
i.e. the pentagonal numbers plus the $m$ times the partial sum of the pentagonal numbers.
  \end{theorem}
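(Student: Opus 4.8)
The plan is to compute the neck $T_1$ directly from a stable formula for the colored Jones polynomial of these knots, using the known structure of the triangle graph together with Theorem \ref{thm:edges} to reduce to a manageable generating function. First I would recall (from Armond--Dasbach) that for an alternating knot whose reduced B-checkerboard graph is the triangle, the colored Jones polynomial is governed, in a stable range, by a product over the edges of the graph of ``twist region'' contributions, each of which is a truncation of an infinite $q$-series. Concretely, a single (reduced) edge contributes the pentagonal-number product $\prod_{n\ge 1}(1-q^{-n})$ in the limit, but an edge carrying $m_i\ge 2$ parallel strands before reduction contributes a \emph{different} truncation whose first correction term, beyond the pentagonal series, is exactly the partial-sum series $\bigl(\prod_{n\ge1}(1-q^{-n})\bigr)/(1-q^{-1})$. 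The key algebraic identity to nail down is that
\begin{equation}
\frac{\prod_{n=1}^\infty(1-q^{-n})}{1-q^{-1}} = \sum_{N\ge 0} q^{-N}\prod_{n=1}^\infty(1-q^{-n}) \bmod (\text{higher order}),
\end{equation}
so that summing the $m$ independent ``defect'' contributions from the $m$ long edges produces precisely the stated $m$-fold multiple of this series.

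The key steps, in order, would be: (1) set up the state-sum / fusion expansion for the triangle-graph knot, expressing $J_{N,K}(q)$ as a finite sum indexed by the admissible colorings of the three edges, with each term a ratio of quantum factorials; (2) isolate the top $2N$ coefficients using Theorem \ref{thm:edges} with $m=1$ to see that only the ``boundary'' terms of the state sum survive in that range, and identify the leading term as $T_0 = \prod(1-q^{-n})$, recovering the Armond--Dasbach computation as the $m=0$ case; (3) push to the next order: extract the coefficient contributions that appear at $q$-degree between $-N$ and $-2N$ but are \emph{not} accounted for by $T_0$, and show each long edge contributes one copy of the truncated partial-sum series while each reduced (single) edge contributes nothing new; (4) assemble these, verify the generating-function identity above, and conclude that $T_1 = \prod_{n\ge1}(1-q^{-n}) + m\,\dfrac{\prod_{n\ge1}(1-q^{-n})}{1-q^{-1}}$; (5) cross-check against the tabulated data for $\overline{8_5}$ (where all three $m_i\ge 2$, so $m=3$... or the relevant count) and the trefoil-type examples to make sure the constant $m$ is counted correctly.

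The main obstacle I expect is step (3): cleanly separating the genuinely new second-order contribution from the part of the state sum that merely re-expresses $T_0$. In the tail/head setting one can appeal to Armond's ``dominant term'' arguments (a single monomial in the fusion expansion dominates the top $N$ coefficients), but for the neck one needs the \emph{subdominant} behavior, and several near-dominant monomials can interfere. I would handle this by working with the differences $J_{N,K}(q) - q^{\ast}J_{N+1,K}(q)$ — the alternative definition of higher-order stability flagged in the introduction, whose equivalence to the subtraction definition is to be established — because in the difference the bulk $T_0$ contribution cancels termwise and the surviving terms are exactly the derivative-like first-order variation in $N$, which for a twist region of length $m_i$ is visibly a geometric-type series in $q^{-1}$ with the single ``extra'' generator $1/(1-q^{-1})$. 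A secondary, more bookkeeping-type difficulty is making the truncation bounds uniform in $N$ so that the claimed $N-1$ stable coefficients of $T_1$ really do stabilize; this should follow from the $(m+1)N$ bound of Theorem \ref{thm:edges} applied with $m=1$, but the constants need to be tracked carefully against the shifts by $N-1$ used in the definition of $T_1$.
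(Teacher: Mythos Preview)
Your outline has the right skeleton (fusion expansion, isolate the summands contributing to the top $2N+1$ coefficients, then extract $T_1$), but step (3) rests on an incorrect picture of where the partial-sum contributions originate, and this is a genuine gap rather than a bookkeeping issue.

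In the paper's computation, the three copies of $\dfrac{\prod(1-q^{-n})}{1-q^{-1}}$ do \emph{not} come from the three edges individually. They come entirely from the single dominant summand $(j_1,j_2,j_3)=(N,N,N)$: its value is $\Gamma(N,N,N)=\dfrac{\Delta_{3N}!\,\Delta_{N-1}!^3}{\Delta_{2N-1}!^3}$, and the key algebraic lemma is the expansion
\[
\{2N\}! \;\stackrel{\cdot\,2N+1}{=}\; (-1)^N q^{\frac{3N^2+N}{4}}\{N\}!\Bigl(1-\tfrac{q^{-N-1}}{1-q^{-1}}\Bigr),
\]
so that $\{2N\}!^2$ in the denominator produces a factor $\bigl(1-\tfrac{2q^{-N-1}}{1-q^{-1}}\bigr)^{-1}$, and normalization by $\Delta_N$ supplies one more $\tfrac{q^{-N-1}}{1-q^{-1}}$. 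After subtracting the stabilized head $\{2N\}!$ one is left with $\prod(1-q^{-n}) + 3\cdot\dfrac{\prod(1-q^{-n})}{1-q^{-1}}$, \emph{regardless} of the $m_i$, provided each $m_i\ge 2$. The role of an edge with $m_i=1$ is the opposite of what you describe: it forces the subdominant summand $(N,N,N-1)$ into the top $2N+1$ window, and that summand evaluates (via $\Gamma_{N,(N-1,N,N)}=\Gamma(N+1,N-1,N-1)$) to $-\dfrac{\prod(1-q^{-n})}{1-q^{-1}}$, \emph{cancelling} one of the three copies. Thus $m = 3 - \#\{i:m_i=1\}$, not a sum of positive contributions from long edges.

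Because of this, your proposed strategy of attributing a ``defect'' series to each long edge and zero to each short edge will not survive contact with the actual state sum: you will find that all the second-order content sits in the $(N,N,N)$ term, and the differences method you suggest (which the paper only records as Corollary~\ref{cor:triangle}) does not by itself separate edge contributions. What you are missing is (i) the quantum-factorial asymptotic for $\{2N\}!$ above, (ii) the closed formula $\Gamma(x,y,z)=\dfrac{\Delta_{x+y+z}!\,\Delta_{x-1}!\,\Delta_{y-1}!\,\Delta_{z-1}!}{\Delta_{y+z-1}!\,\Delta_{z+x-1}!\,\Delta_{x+y-1}!}$, and (iii) the skein identification $\Gamma_{N,(N-1,N,N)}=\Gamma(N+1,N-1,N-1)$ for the subdominant term. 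With those in hand the computation is direct and the sign makes the cancellation mechanism transparent.
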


  Using the alternate view of stability provided by Dasbach, we get the following corollary:

 \begin{corollary}
 Again, let $m$ be the number edges in the checkerboard graph with $m_i$ of 2 or more.  Then we have
 \begin{equation}J'_{N,K}-q^*J'_{N+1,K}\stackrel{\cdot N-1}{=}(1+m-q^{-1})\prod_{n=1}^\infty(1-q^{-n}). \end{equation}
 \end{corollary}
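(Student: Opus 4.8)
The plan is to derive the corollary directly from Theorem~\ref{thm:triangle} by translating the ``subtract off the stable sequence'' formulation of higher order stability into the ``consecutive difference'' formulation suggested by Dasbach, and then evaluating the telescoping sum. First I would record carefully the equivalence asserted (but only informally stated) in the introduction: if the highest $N$ coefficients of $\J$ stabilize to a series $T_0$ and the next $N-1$ coefficients of $\J - T_0$ stabilize to $T_1$ (suitably shifted), then the highest $N-1$ coefficients of $\J'_{N,K} - q^{*}\J'_{N+1,K}$ stabilize as well, and the limiting series is expressible in terms of $T_0$ and $T_1$. Concretely, writing $\J' \doteq T_0 + q^{-N}\!\cdot(\text{shifted }T_1) + (\text{lower order})$, the difference $\J'_{N,K} - q^{*}\J'_{N+1,K}$ has its top $N$ coefficients coming from $T_0 - T_0 = 0$ except for the boundary term where the $q^{-N}$-shifted copy of $T_1$ in the degree-$N$ polynomial meets nothing in the degree-$(N+1)$ polynomial; so the stable value of the difference is, up to the normalizing power of $q$, the ``first coefficient block'' contributed by $T_1$ against the tail of $T_0$. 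I would make this bookkeeping precise with an explicit degree shift so that all three series live in $\mathbb{Z}[[q^{-1}]]$ with top degree $0$.

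Next I would plug in the explicit formula from Theorem~\ref{thm:triangle}. We have $T_0 = \prod_{n=1}^{\infty}(1-q^{-n})$ and the neck $T_1 = \prod_{n=1}^{\infty}(1-q^{-n}) + m\,\dfrac{\prod_{n=1}^{\infty}(1-q^{-n})}{1-q^{-1}}$. The claim to be proved is
\begin{equation}
\J'_{N,K} - q^{*}\J'_{N+1,K} \stackrel{\cdot\, N-1}{=} (1 + m - q^{-1})\prod_{n=1}^{\infty}(1-q^{-n}).
\end{equation}
So the arithmetic heart of the matter is the identity
\begin{equation}
T_0 - q^{-1}\Bigl(\text{stabilized difference data from }T_1\Bigr) = (1+m-q^{-1})\,T_0,
\end{equation}
which, after clearing the $\frac{1}{1-q^{-1}}$, amounts to checking that $(1-q^{-1})\bigl[(1+m-q^{-1}) - 1\bigr]T_0 = (1-q^{-1})(m - q^{-1})T_0$ matches $(1-q^{-1})$ times the appropriate combination of $T_0$ and $T_1$; equivalently, that $(1-q^{-1})T_1 = (1-q^{-1})T_0 + m\,T_0$, which is immediate from the displayed formula for $T_1$. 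The factor $(1-q^{-1})$ appearing on the right of the corollary is exactly what accounts for the drop from $N$ to $N-1$ stable terms (one consecutive difference costs one coefficient), and it is reassuring that $(1-q^{-1})T_0 = \prod_{n=1}^{\infty}(1-q^{-n}) \cdot (1-q^{-1})$ is again a well-defined power series with the right leading behavior.

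I expect the main obstacle to be purely notational rather than conceptual: the statement uses the shorthand $\stackrel{\cdot\, N-1}{=}$ and a floating power $q^{*}$, and the passage from ``$T_1$ as a shifted stabilized sequence'' to ``$T_1$ as a genuine element of $\mathbb{Z}[[q^{-1}]]$'' has to be done with enough care that the boundary contributions are tracked correctly — in particular one must check that the lower-order (degree $< N - (N-1)$, i.e. the part below the stable window) contributions of $\J'_{N,K}$ and $q^{*}\J'_{N+1,K}$ do not leak into the first $N-1$ coefficients of the difference. Once the equivalence of the two notions of higher order stability is pinned down (which is the content of the sentence ``We will show that their stability is equivalent'' in the introduction, and which I would either cite from that section or prove in a short lemma), the corollary is a one-line substitution: expand $q^{*}\J'_{N+1,K} \doteq T_0 + \text{(shift of }T_1\text{)} + \cdots$, subtract, and simplify using $(1-q^{-1})T_1 = (1+m-q^{-1})\cdot(1-q^{-1})T_0 / (1-q^{-1})$ — i.e. directly reading off the factored form of $T_1$. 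I would present this as a Proof of Corollary immediately following the proof of Theorem~\ref{thm:triangle}, reusing its notation.
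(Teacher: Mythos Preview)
Your proposal is correct and follows essentially the same approach as the paper: write $J'_{N,K}\stackrel{\cdot 2N-1}{=}T_0+q^{-N}T_1$ and $q^{*}J'_{N+1,K}\stackrel{\cdot 2N+1}{=}T_0+q^{-N-1}T_1$, subtract, and simplify using the explicit form of $T_1$ from Theorem~\ref{thm:triangle}. The paper carries this out in three lines by directly computing $\bigl(P+q^{-N}P+\tfrac{q^{-N}mP}{1-q^{-1}}\bigr)-\bigl(P+q^{-N-1}P+\tfrac{q^{-N-1}mP}{1-q^{-1}}\bigr)=q^{-N}P(1+m-q^{-1})$; your key identity $(1-q^{-1})T_1=(1-q^{-1})T_0+mT_0$ is exactly this factorization, so once you strip away the surrounding discussion of equivalence-of-stability-notions (which the paper handles implicitly via the displayed $\stackrel{\cdot 2N-1}{=}$), your argument and the paper's coincide.
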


  The paper is arranged as follows: In Section 2, we will show how the existence of $T_m$ is similar to a property proved by \GarLe.  We then explicitly compute this sequence, in certain cases, using the framework set up by Armond and Dasbach.  In particular,  in Section 3, we show what graph reductions can be done to find these stable sequences. Then, in Section 4, we find an explicit description of $T_1$ for knots whose B-checkerboard graph is a triangle graph.  
  
 \section*{Acknowledgments}
This paper is largely based on work I did for my PhD under my advisor, Justin Roberts. I would like to thank him for our many, many helpful conversations over the years. I am also thankful for various conversations with Cody Armond and Oliver Dasbach about their work relating to stability of the colored Jones sequence. Thanks to the referee for helpful suggestions and clarifying remarks.

\section{Connection to the work of \GarLe}

The motivation for looking at this higher order stability came from the work of \GarLe. In \cite{Nahm}, they independently proved that the head and tail of the colored Jones polynomial exist for alternating knots while proving (for alternating knots) a stronger version of this stability. In particular, they defined the property of $k$-stability for a sequence of polynomials as follows:
\begin{definition}
Suppose $f_n(q), f(q) \in \mathbb{Z}((q)),$ i.e. $f_n(q)$ and $f(q)$ are formal Laurent series -- series that can be written as $\sum_{i\ge m}a_iq^i$ where $a_i \in \mathbb{Z}$.  We write that
\begin{equation} \lim_{n \to \infty} f_n(q) =f(q) \end{equation}
if
\begin{itemize}
\item there exists $C$ such that $\textrm{mindeg}_q(f_n(q)) \geq C$ for all $n$, and
\item 
for each $j \in \mathbb{Z}$, 
\begin{equation} \lim_{n \to \infty} \textrm{coeff}(f_n(q),q^j)=\textrm{coeff}(f(q),q^j) \end{equation}
This implies that for each $j$ there exist $N_j$ such that for all $n > N_j$
\begin{equation}f_n(q)-f(q) \in q^j\mathbb{Z}[[q]]\end{equation}
In particular, $\textrm{coeff}(f_n(q),q^j)=\textrm{coeff}(f(q),q^j)$ for all $n>N_j$.
\end{itemize}
\end{definition}
\begin{definition}
A sequence ($f_n(q)) \in \mathbb{Z}[[q]]$ is \emph{k-stable} if there exist $\Phi_j(q) \in \mathbb{Z}((q))$ for $j=0,\ldots,k$ such that
\begin{equation}\lim_{n \to \infty} q^{-k(n+1)}\left(f_n(q)-\sum_{j=0}^k \Phi_j(q)q^{j(n+1)}\right)=0.\end{equation}
We call $\Phi_k(q)$ the $k$-limit of $(f_n(q))$. We say that $(f_n(q))$ is \emph{stable} if it is $k$-stable for all $k$.
\end{definition}
For example, a sequence $(f_n(q))$ is \emph{3-stable} if

\begin{equation}\lim_{n \to \infty} q^{-3(n+1)}\left(f_n(q)-\left(\Phi_0(q)+q^{(n+1)}\Phi_1(q)+q^{2(n+1)}\Phi_2(q)+q^{3(n+1)}\Phi_3(q)\right)\right)=0.\end{equation}

The property of the head and tail existing for the colored Jones Polynomial of a knot  is similar to the condition of the colored Jones sequence being $0-$stable. In addition, in \cite{Nahm}, \GarLe proved the following theorem about higher order stability.
\begin{theorem}[\cite{Nahm}]
\label{GarThm}
For every alternating link $K$, the sequence $f_N(q)=(\hat{J}_{N+1,K}(q))$ is stable and its associated $k$-limit $\Phi_{K,k}(q)$ can be effectively computed from any reduced alternating diagram $D$ of $K$.
\label{highstab}
\end{theorem}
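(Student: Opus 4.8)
The plan is to follow the strategy of \GarLe: realize $\hat{J}_{N+1,K}(q)$, up to an explicit monomial prefactor, as a finite ``Nahm-type'' multisum whose summand factorizes over the crossings of a reduced alternating diagram, and then prove that such multisums are stable by localizing the sum near a distinguished corner of the summation region. \textbf{Step 1 (a state sum from the diagram).} Fix a reduced alternating diagram $D$ of $K$ with $c$ crossings. Using the Kauffman bracket / fusion expansion for adequate links (as set up by Armond and Dasbach), I would write
\[ \hat{J}_{N+1,K}(q) = (-1)^{\epsilon(N)} q^{\alpha(N)} \sum_{\vec n \in S_N} c_{\vec n, N}(q), \]
where $S_N \subseteq \{0,1,\dots,N\}^{r}$ is a lattice of admissible states (one coordinate per crossing or per twist region of $D$), $\alpha(N)$ is an explicit quadratic polynomial in $N$, and each $c_{\vec n,N}(q)$ is, up to sign and a monomial, a product of quantum factorials $(q)_{m}$ and Gaussian binomials $\binom{a}{b}_q$ with arguments affine in $(\vec n, N)$. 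The structural point that the alternating (adequate, reduced) hypothesis buys is that this product factorizes into one ``local'' factor per crossing, so the whole summand is controlled region by region.

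\textbf{Step 2 (a stability calculus).} Next I would record the elementary closure properties of the $k$-stability of the Definition: if $(f_N)$ and $(g_N)$ are $k$-stable then so are $(f_N + g_N)$ and $(f_N g_N)$, with $k$-limits given by the evident finite combinations of the $\Phi_j$'s; multiplication by $q^{aN+b}$ shifts the limits predictably; and the elementary sequences that arise as local factors --- ratios of $q$-Pochhammer symbols such as $(q^{N+1};q)_\infty$ and $1/(q)_N$ --- are each stable, with $k$-limits computed explicitly by the $q$-binomial theorem and the Jacobi triple product. (The pentagonal-number tail $T_0$, and the partial-sum correction appearing later in the paper, are instances of these limits.) This reduces the theorem to an estimate on which states $\vec n$ matter.

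\textbf{Step 3 (localization — the heart).} I would show that
\[ \mathrm{mindeg}_q\, c_{\vec n, N}(q) \;\ge\; \alpha'(N) + Q(\vec n, N), \]
where $Q$ is bounded below by a positive combination of the ``deficits'' $n_i$ and $N - n_i$ (here reducedness of the alternating diagram is essential: the relevant quadratic form is positive, whereas for an inadequate diagram it would be indefinite and stability would fail). Hence for fixed $j$ and $k$, once $N$ is large, only the states $\vec n$ lying in a neighborhood of a distinguished corner of the cube --- of size depending on $j$ and $k$ but \emph{not} on $N$ --- can affect $\mathrm{coeff}\big(q^{-k(N+1)}(\hat{J}_{N+1,K}(q) - \sum_{i\le k}\Phi_{K,i}(q)\,q^{i(N+1)}),\, q^{j}\big)$. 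On that bounded corner neighborhood each $c_{\vec n,N}(q)$ stabilizes coordinatewise by Step 2, so summing the stabilized pieces yields the $k$-limit $\Phi_{K,k}(q)$ as an explicit finite sum over the corner neighborhood of products of $q$-Pochhammer symbols. Since $S_N$, $\alpha(N)$, the local factors, and the corner are all read directly off $D$, this exhibits $\Phi_{K,k}(q)$ as effectively computable from $D$, proving the theorem.

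\textbf{Main obstacle.} The delicate step is the positivity estimate in Step 3: one must check that the quadratic exponent coming from the global framing monomial $\alpha(N)$ dominates the negative-going contributions of the $1/(q)_m$ denominators in \emph{every} deficit direction, uniformly in $N$, and strongly enough to yield stability for all $k$ at once rather than a single fixed $k$. The accompanying bookkeeping difficulty is controlling the sign factor $(-1)^{\epsilon(N)}(-1)^{\vec c\cdot\vec n}$ so that distinct states in the corner neighborhood do not conspire to cancel in a way that would alter $\Phi_{K,k}$; this, too, is where reducedness of the alternating diagram is used.
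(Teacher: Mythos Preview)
This theorem is \emph{not} proved in the paper: it is quoted from \cite{Nahm} as the result of \GarLe, and the paper explicitly says it does \emph{not} rely on it (``As a consequence of this discrepancy, we do not rely on this theorem. The needed results are proved independently using skein theoretical techniques.''). So there is no in-paper proof to compare against.

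Your sketch is a fair high-level outline of the actual \GarLe argument in \cite{Nahm}: realize the normalized colored Jones polynomial as a Nahm-type $q$-multisum built from a reduced alternating diagram, establish a closure/stability calculus for the elementary $q$-Pochhammer factors, and then localize near the dominant corner via a positivity estimate on the quadratic exponent. That is the right architecture, and your identification of the ``main obstacle'' (positivity of the quadratic form, which genuinely uses that the diagram is reduced and alternating) is on target.

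What the present paper does instead is orthogonal in method and narrower in scope. Rather than the generating-function/Nahm-sum machinery, it works entirely inside the Kauffman bracket skein theory following Armond--Dasbach: it expands $J_{N+1,K}$ as a multisum over fusion labels $j_i$, uses the degree lemmas on $\gamma$, $\Delta/\theta$, and $\Gamma$ to show that only the top corner $(j_i=N)$ and, for the neck, the adjacent terms $(j_i=N-1)$ contribute, and then evaluates those few graphs explicitly (via $\Gamma(x,y,z)$). This yields stability \emph{from the first $N$}, not merely eventually, but only for the specific families treated (triangle reduced graph, and the $m{+}1$-reduced-graph invariance of Theorem~\ref{thm:edges}). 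So: your proposal matches the cited source, while the paper's own arguments are skein-theoretic, case-specific, and do not attempt the general stability theorem at all.
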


A note on indexing, scaling and normalization: \GarLe use the convention that $J_{n,K}$ gives the colored Jones polynomial with each component colored by the ($n+1$)-dimensional irreducible representation of $\mathfrak{sl}_2$. (So, $n=1$ gives the standard Jones polynomial.)  In this paper, we use another standard convention that instead $J_{N,K}$ gives that colored Jones polynomial with each component colored by the $N$-dimensional irreducible representation.  (So, $N=2$ gives the standard Jones polynomial). These changes were already made to the statement of Theorem \ref{GarThm} above. To obtain $\hat{J}_{N+1,K}$, we divide $J_{N+1,k}$ by its lowest monomial so that its lowest term is now 1. These polynomials are not normalized. We generally normalize so that the value of the colored Jones polynomial of the unknot is 1.  We will call this normalized polynomial, $J'_{N,K}.$  In \cite{Nahm}, \GarLe remark that the same stability holds for the normalized colored Jones sequence. 

Instead of viewing stability as a limit, we look at stability from the beginning of the sequence by focusing on particular coefficients of the colored Jones polynomials in the colored Jones sequence for a given knot.  First, we will consider the similarities and differences between the theorem proved in \cite{Nahm} and the pattern seen in the subtraction of coefficients. 

The theorem tells us that the colored Jones sequence for alternating knots is stable, meaning it is $k$-stable for any $k$. To begin, let's look at what it means for the sequence to be $0$-stable. If $f_N(q)=J_{N+1,K}$ is 0-stable then 
\begin{equation}\lim_{N \to \infty} J_{N+1,K}(q)-\Phi_0(q)=0.\end{equation}
This only means that $J_{N+1,K}$ matches $\Phi_0(q)$ for an arbitrarily large number of terms for sufficiently large $N$.  It does not guarantee that the first $N+1$ terms of $J_{N+1,K}(q)$ are the same as those of  $\Phi_0(q)$. 

However, as we continue on and look at $1$-stability, we have that  
\begin{equation}\lim_{N \to \infty} q^{-(N+1)} \left(J_{N+1,K}(q)-\Phi_0(q)-q^{N+1}\Phi_1(q)\right)=0.\end{equation} 
In order for the left hand side to have a minimum degree, as the first condition in the definition requires, at some point the first $N+1$ coefficients of $J_{N+1,K}(q)$ must match those of $\Phi_0(q)$ and continue matching as $N$ increases.  Thus, at some point, we must have the first $N+1$ coefficients stabilize.  This is not as strong as the theorem due to Armond in \cite{Arm} since that theorem guarantees this coefficient stabilization from the beginning.  However, the proof given for $0$-stability in the paper  \cite{Nahm}, does include this stronger result. 

As we look at higher order stabilizations, this pattern continues. In particular, a $k+1$-stable sequence must have the property that  
\begin{equation}\lim_{N \to \infty} q^{-(k+1)(N+1)} \left(J_{N+1,K}(q)-\sum_{j=0}^{k+1}q^{j(N+1)}\Phi_j(q)\right)=0.\end{equation} We can see that in order for this to have a lowest degree we must have the the first $k(N+1)$ coefficients of $J_{N+1,K}(q)$ match $\sum_{j=0}^k q^{j(N+1)} \Phi_j$ for large enough $N$.  This gives us, that for large enough $N$, we have the property we observed above, i.e. that $J_{N+1,K}=\Phi_0+q^{N+1}\Phi_1+q^{2(N+1)}\Phi_2+\cdots$.  It, again, however,  does not guarantee this property from the beginning. 

As a note, it seems there may be an indexing error in the statement of the theorem. As we can see in the example in the introduction, for the knot $8_5$, the second stable sequence stabilizes with length one less than guaranteed by the theorem of \GarLe.  This continues as we increase $N$, contradicting their theorem. As a consequence of this discrepancy, we do not rely on this theorem.  The needed results are proved independently using skein theoretical techniques. We also prove that the stability is a property true from the beginning of the sequence, not starting at some unknown $N$.  Because of the $N$ versus $N+1$ discrepancy, we will use different notation, i.e. $T_i$ instead of $\Phi_i(q)$ for the $i$th stable sequence. Also, $T_i$ gives us the highest order terms while $\Phi_i$ gives the lowest order - since we can change $q$ to $q^{-1}$ in the polynomial by taking a mirror image of the knot, this is not a major difference. What we will show is a special case of the results of \GarLe with a change of $N$ to $N+1$ in specific cases. 

\section{Finding $T_i(q)$ from the reduced graph.}

In \cite{DasArm}, Armond and Dasbach show that the head and tail of the colored Jones polynomial of alternating links only depend on the reduced checkerboard graphs of the knot diagrams.  We will show that to find $T_m(q)$, we can reduce any parallel edges of $m+1$ or more in the checkerboard graph with $m+1$ parallel edges. We call this the $m+1$-reduced checkerboard graph.

 Given an alternating diagram of a knot, we can assign a (gray/white) checkerboard coloring the faces in the diagram. We then place a vertex in each of the gray colored regions.  We draw an edge between vertices for every crossing between the corresponding regions.  Alternatively, we can start by placing a vertex in every white region to get the dual graph.  If, when moving along an edge, the over-crossing starts on the right of the edge and ends on the left, this graph is the A-checkerboard graph. If the over-crossing goes from the left to the right, the graph is the B-checkerboard graph.  See Fig. \ref{knotgraph}. To get the $m$-reduced checkerboard graph, we can replace higher order parallel edges in the graph, i.e. $m$ or more edges between the same vertices, with a $m$ parallel edges.  When $m=1$, the 1-reduced graph is the same as the standard reduced graph.

\begin{figure}[ht]
\centering
\begin{subfigure}[b]{.25\textwidth}
\centering
\includegraphics[width=\textwidth]{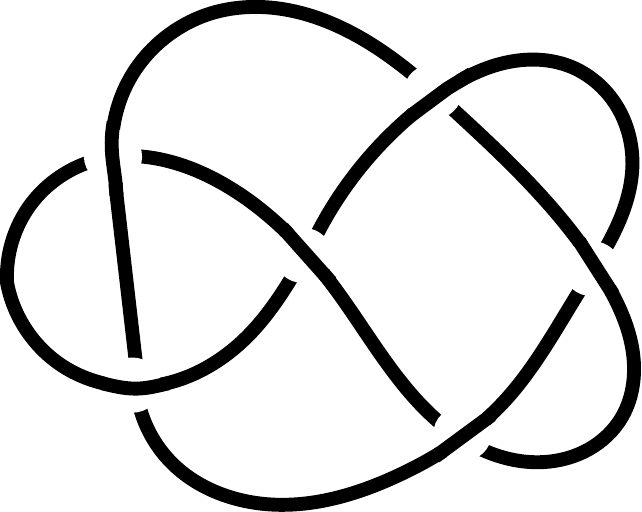}
\caption{A diagram of $6_1$}
\end{subfigure}
\qquad
\begin{subfigure}[b]{.25\textwidth}
\centering
\includegraphics[width=\textwidth]{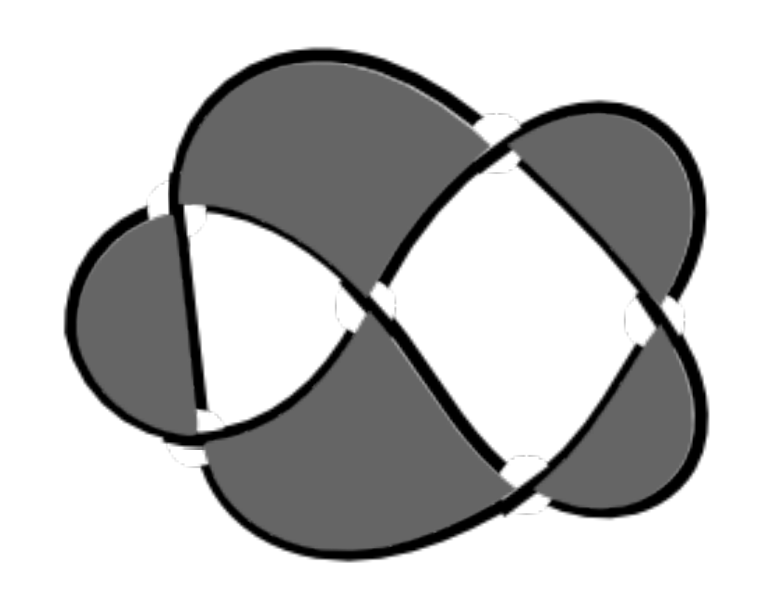}
 \caption{$6_1$ with a checkerboard coloring }
\end{subfigure}

\begin{subfigure}[b]{.25\textwidth}
\centering
\includegraphics[width=\textwidth]{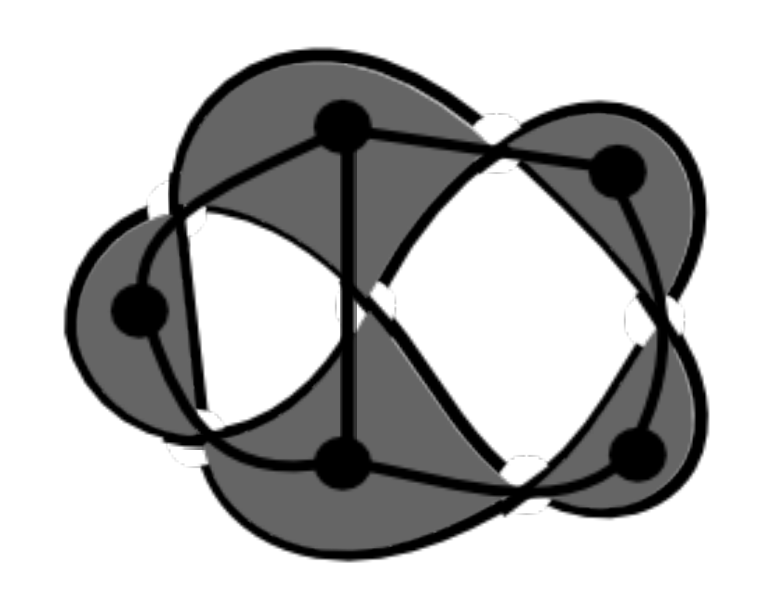}
 \caption{The A-checkerboard graph }
\end{subfigure}
\qquad
\begin{subfigure}[b]{.25\textwidth}
\centering
\includegraphics[width=\textwidth]{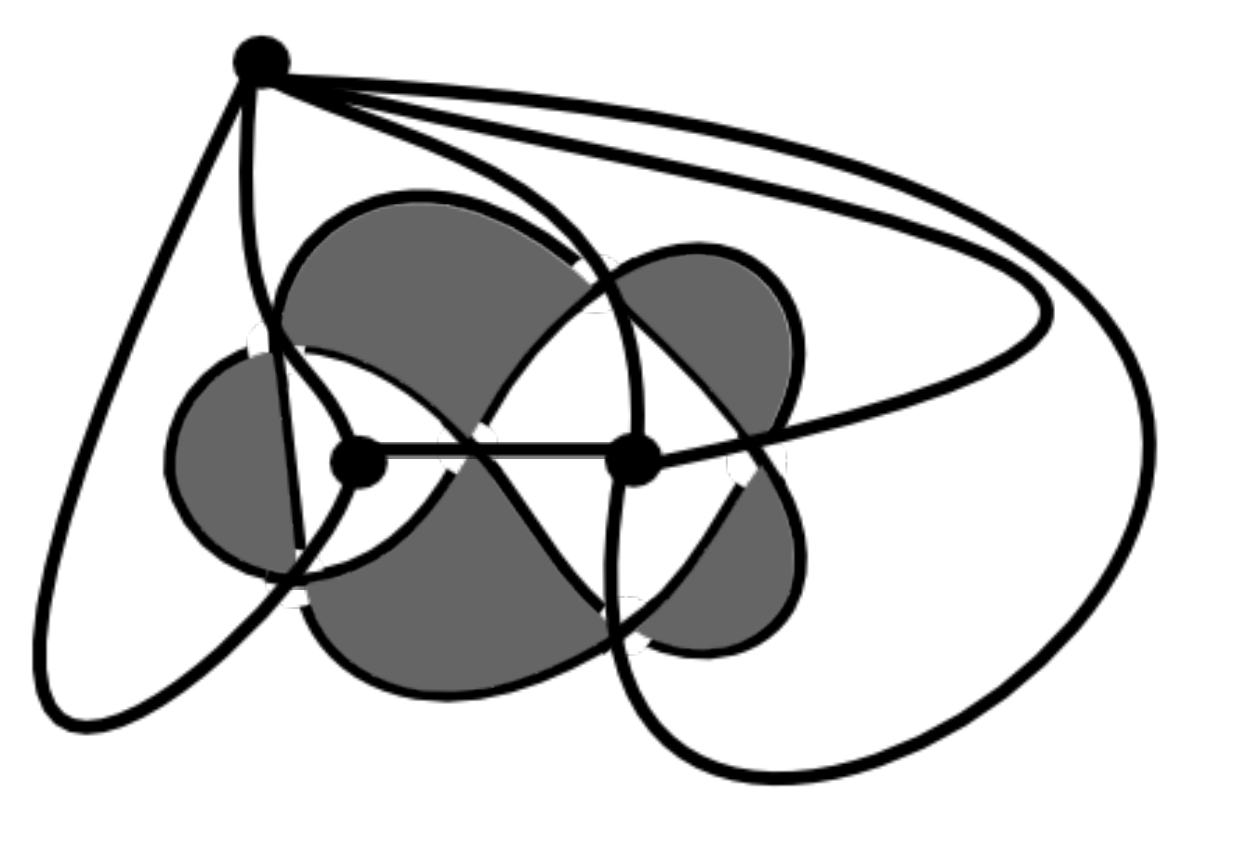}
 \caption{The B-checkerboard graph}
\end{subfigure}
 \caption{The Knot $6_1$ and its associated graphs.}
\label{knotgraph}
\end{figure}

Armond and Dasbach proved that the head and tail of the colored Jones polynomial of alternating links only depend on the reduced graph structure. 

\begin{theorem}[\cite{DasArm}]
Let $K_1$ and $K_2$ be the two alternating links with alternating diagrams $D_1$ and $D_2$ such that the reduced A-checkerboard (respectively B checkerboard) graphs of $D_1$ and $D_2$ coincide. Then the tails (respectively heads) of the colored Jones polynomial of $K_1$ and $K_2$ are identical.
\label{tailstab}
\end{theorem}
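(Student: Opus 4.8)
The approach is to reduce to the B-checkerboard graph and the head: passing to the mirror image swaps the A- and B-checkerboard graphs and replaces $q$ by $q^{-1}$, so the A-graph/tail case follows from the B-graph/head case. An alternating diagram is B-adequate, so Armond's theorem \cite{Arm} already guarantees that the head of $J_{N,K}$ exists, i.e.\ there is a Laurent series $H_K(q)$ whose top $N$ coefficients agree with those of $J'_{N,K}(q)$ for every $N$. The plan is then to prove that $H_K$ is unchanged under a local \emph{de-twisting} move that deletes one crossing from any twist region containing at least two crossings --- equivalently, lowers by one the multiplicity of a parallel family of $\ge 2$ edges in the B-checkerboard graph. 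Iterating the move collapses any alternating diagram to a canonical one in which every twist region is a single crossing, and the head of such a diagram will turn out to be an explicit function of the reduced B-checkerboard graph alone; the theorem follows.

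To analyze a twist region, one computes $J_{N,K}$ in the Kauffman bracket skein module of $S^3$ (a Laurent polynomial in $A$, with $q$ a fixed power of $A$) by cabling every strand of the diagram with the Jones--Wenzl idempotent $f^{(N-1)}$. Inside a twist region, the two parallel $f^{(N-1)}$-decorated strands running through $t$ crossings expand, via fusion and recoupling, as
\begin{equation}
\sum_{k=0}^{N-1}\lambda_k^{t}\,\mu_k(A)\,E_k,
\end{equation}
where $E_k$ is the tangle joining the two strands through the $f^{(2k)}$-channel, $\lambda_k$ is the explicit twist (framing) eigenvalue of that channel, and $\mu_k(A)$ collects the Jones--Wenzl and $\theta$-net coefficients. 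Substituting these expansions in every twist region writes $J_{N,K}$ as a finite state sum, and for each state one records its extremal $A$-degree, the one contributing to the head.

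The crucial input is a degree comparison. One shows that the extremal state uses, in each twist region, one fixed channel $k_0$ (for the all-$B$ smoothing, $k_0=0$ or $N-1$ depending on conventions), that this dominant state is the same for every $t\ge 1$, and that every other state has extremal $A$-degree smaller by a gap growing linearly in $N$, so that such states affect only coefficients strictly below the top $N$. Hence, if $K'$ is obtained from $K$ by one de-twisting move, $J_{N,K}$ and $J_{N,K'}$ have the same top $N$ coefficients up to a single monomial $\pm q^{*}$ (the combined effect of the change in writhe and of the discarded factor $\lambda_{k_0}$); normalizing to $J'$ removes this monomial, and in the unnormalized statement it is precisely the $\pm 1$ rescaling. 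This is the asserted invariance of $H_K$. Finally, on the canonical single-crossing diagram one identifies the head with an explicit skein-theoretic state sum over the reduced B-checkerboard graph $G$ --- for the triangle graph, for instance, it is the pentagonal-number series of the introduction --- which manifestly depends only on $G$; this both completes the de-twisting reduction and exhibits $H_K$ as an invariant of the reduced graph, as claimed.

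The main obstacle is making the degree estimate uniform in $N$: the conclusion concerns the top $N$ coefficients for \emph{all} $N$, so one needs the gap between the dominant state and each competitor to be bounded below by a linear function of $N$, simultaneously across all twist regions, and one must check that these local bounds survive the gluing --- they do, since the twist regions are disjoint subtangles, but the bookkeeping of the degrees of the Jones--Wenzl coefficients, the $\theta$-nets, and the powers $\lambda_k^{t}$ is where the real work lies. Adequacy of the diagram is what rules out unexpected cancellation among the top-degree terms, so Armond's theorem is used not only to know that $H_K$ exists but also to pin down the extremal degree at the gluing step.
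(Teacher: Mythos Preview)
Your approach is correct and its technical core is the same as the paper's sketch of the Armond--Dasbach argument: apply fusion at each negative-twist region and use degree estimates on the resulting multi-sum to show that only the maximal channel ($j_i=n$ in the paper's notation, your $k_0$) contributes to the top $n+1$ coefficients. The paper organizes the conclusion more directly than you do. Rather than proving invariance under a single de-twisting move and iterating down to a canonical single-crossing diagram, it observes in one shot that the unique surviving summand
\[
\prod_{i}\gamma(n,n,2n)^{m_i}\,\frac{\Delta_{2n}}{\theta(n,n,2n)}\,\Gamma_{n,(n,\ldots,n)}
\]
depends on the multiplicities $m_i$ only through the monomial $\gamma(n,n,2n)^{m_i}$, hence not at all up to shift, while $\Gamma_{n,(n,\ldots,n)}$ is determined by the reduced B-checkerboard graph. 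So no canonical diagram and no separate ``explicit state sum over the reduced graph'' step is needed. The paper also stresses a point that simplifies your outline: Armond's existence theorem is \emph{not} an input here. The degree argument already shows that the top $n+1$ coefficients of $J_{n+1,K_1}$ and $J_{n+1,K_2}$ coincide for every $n$, which is exactly the statement that the heads are identical; existence of the head is a separate fact, and the paper explicitly remarks that the Armond--Dasbach proof does not establish it. Your de-twisting framing is equivalent but slightly more roundabout, and your appeal to adequacy to rule out cancellation is doing the same work as the paper's Lemma on $d(\Gamma_{n,(j_1,\ldots,j_k)})$.
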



The main idea of their proof of this theorem is to show that the head only depends on the highest term in the summand that gives the colored Jones polynomial. To get that sum, we can use formulas for the evaluation and simplification of various pieces.  These formulas can be found in many places, including \cite{Mas}. 



 To find the colored Jones polynomial, we will need to use fusion. 
 
\begin{equation}\raisebox{-.5\height}{\includegraphics[width=.1\textwidth]{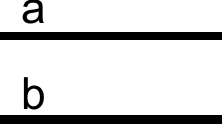}}=\sum_c\frac{\Delta_c}{\theta(a,b,c)}\raisebox{-.5\height}{\includegraphics[width=.1\textwidth]{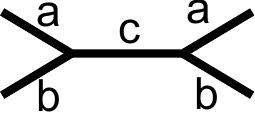}}\end{equation}

This relies on 

\begin{equation}\Delta_n=\left \langle \raisebox{-.5\height}{\includegraphics[scale=.1]{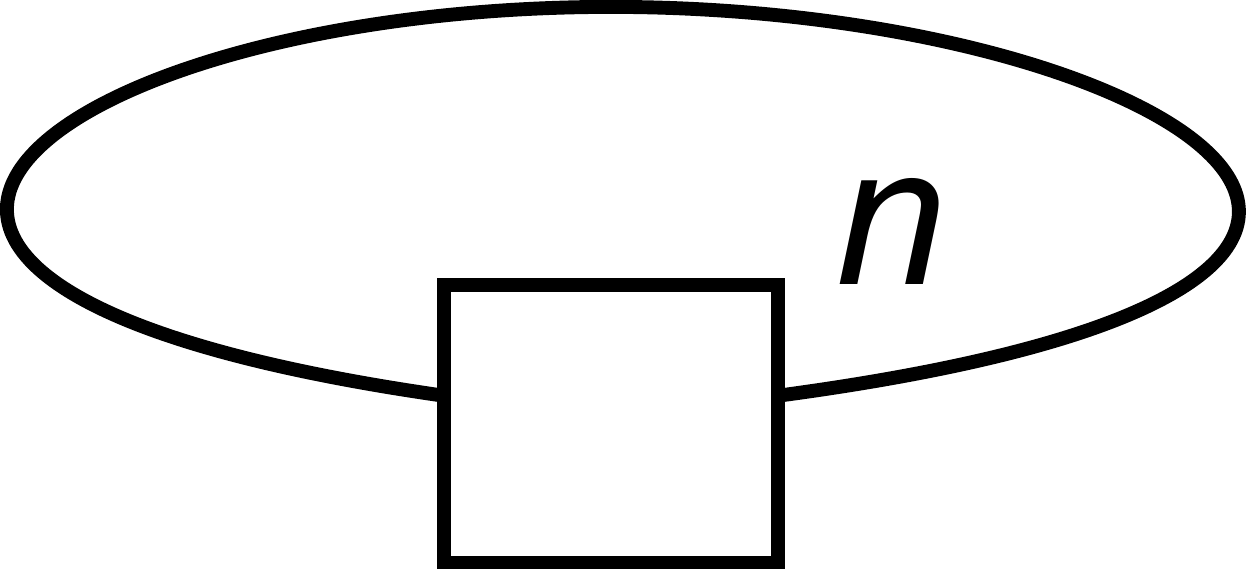}}\right \rangle=(-1)^n[n+1]\end{equation}
where \begin{equation}[n]=\frac{\{n\}}{\{1\}} \textrm{, } \{n\}=A^{2n}-A^{-2n}\textrm{ and } A^{-4}=a^{-2}=q.\end{equation}

  It also relies on the evaluation of $\theta(a,b,c)$. Assume $(a, b, c)$ is an admissible triple, then let $i,j,k$ be the internal colors, in particular 
\begin{equation} i=(b+c-a)/2 \qquad \quad j=(a+c-b)/2 \qquad \quad k=(a+b-c)/2.\end{equation} The condition of being an admissible triple is exactly the condition that makes $i,j$ and $k$ positive integers. 

The term $\theta(a,b,c)$ is the trihedron coefficient. In particular, 
\begin{equation}\theta(a,b,c) = \left \langle\raisebox{-.25
\height} {\includegraphics{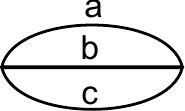}} \right \rangle= (-1)^{i+j+k}\frac{[i+j+k+1]![i]![j]![k]!}{[i+j]![j+k]![i+k]!}.\end{equation}

Once we use fusion, we get rid of twists using $\gamma$, the negative half twist coefficient. 
\begin{equation}\raisebox{-.5\height}{\includegraphics[width=.1\textwidth]{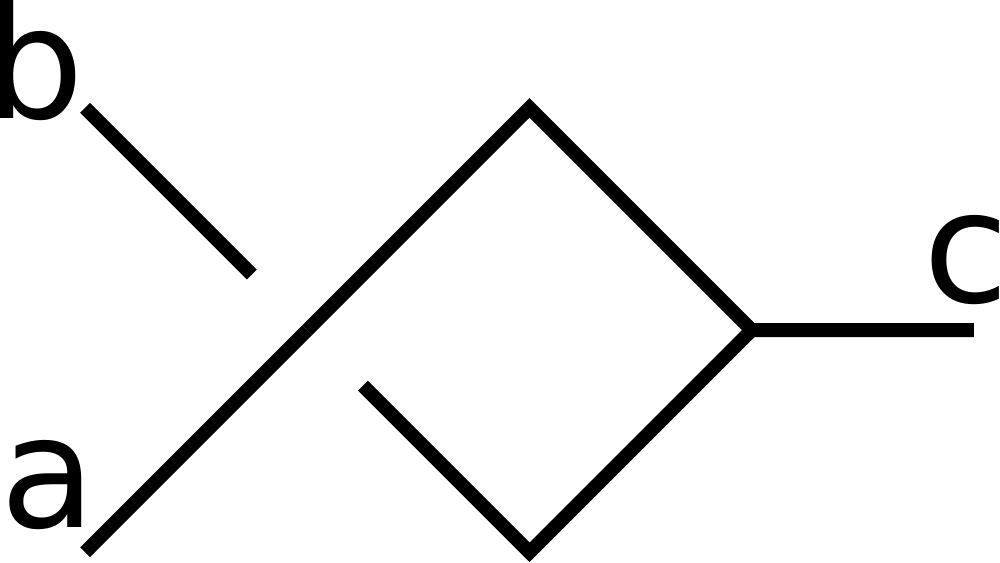}}=\gamma(a,b,c)\raisebox{-.5\height}{\includegraphics[width=.1\textwidth]{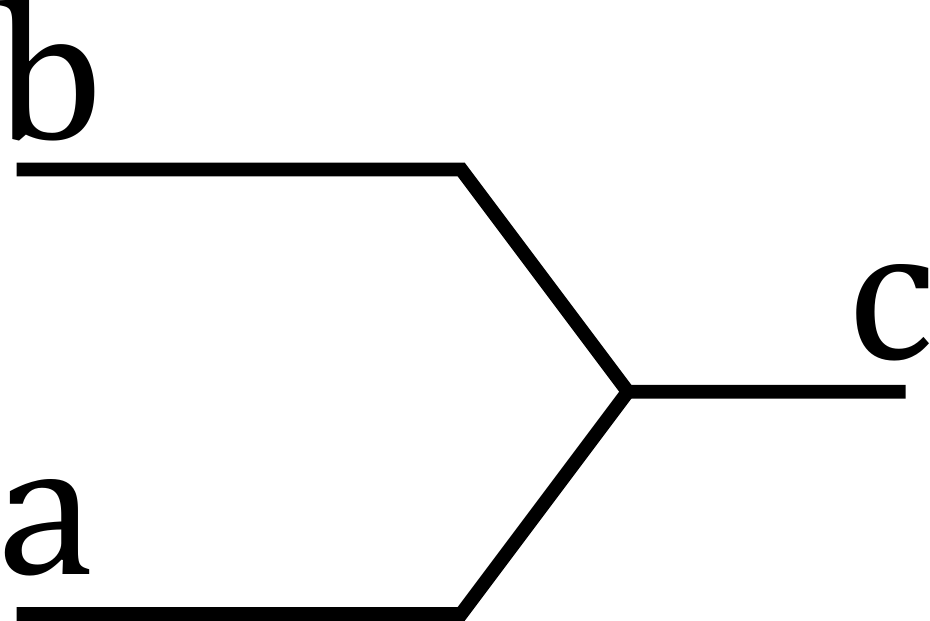}}\end{equation}
with 
 \begin{equation}\gamma(a,b,c )=(-1)^\frac{a+b-c}{2} A^{a+b-c+\frac{a^2+b^2-c^2}{2}}. \end{equation}


To use these to find the colored Jones polynomial of a knot, we identify all of the negative twist regions of the knot diagram $D$.  A negative twist region is a section of the knot with one or more consecutive negative twists. (Here, we think of a local orientation where each strand is oriented in the same direction, as opposed to a global orientation, when defining negative twists.) We do fusion along each of the $k$ negative twist regions and then use the twist coefficients to remove all crossings. 

 Each of the $k$ regions corresponding to the negative twists will be labeled with $2j_i$ for $1\le i \le k$. The other edges will be labeled $n$. Thus we have a multi-sum of trivalent graphs, $\Gamma_{n,(j_1,\ldots ,j_k)}$ where each $j_i$ runs from $1$ to $n$.  The evaluation of this sum gives us the $n+1$ colored Jones polynomial of the knot $K$. i.e.,
\begin{equation}J_{n+1,K}=\sum_{j_1,\ldots j_k=0}^{n}\prod_{i=1}^{k}\gamma(n,n,2j_i)^{m_i}\frac{\Delta_{2j_i}}{\theta(n,n,2j_i)}\Gamma_{n,(j_1,\ldots ,j_k)}\end{equation}

Armond and Dasbach show that when we decrease a single $j_i$ from $n$ to $n-1$, we decrease the highest degree (in $q$) by $n+1$ and we continue to decrease it as we decrease $j_1$ further, so the only graphs that contribute to the highest $n+1$ terms of the colored Jones polynomial, the head, are the ones where all $j_i=n$.

We will use the notation $f(q)\stackrel{\cdot n}{=}g(q)$ if the highest $n$ terms of $f(q)$ agree with those of $g(q)$, i.e once we multiply by some power of $q$ to get $f^*$ and $g^*$ so that $f^*(q)$ and $g^*(q)$ both have highest degree equal to $n$, then $f^*(q)-g^*(q)$ has highest degree $k$ for some $k\leq 0$.

  Thus we have 
\begin{equation}J_{n+1,K}\stackrel{\cdot n+1}{=}\prod_{i=1}^{k}\gamma(n,n,2n)^{m_i}\frac{\Delta_{2n}}{\theta(n,n,2n)}\Gamma_{n,(n,\ldots ,n)}.\end{equation}

When calculating $J_{n+1,K},$ we calculate in terms of the variable $A$ where $A^{-4}=q$, so looking at the highest degree terms of $q$ is the same as looking at the lowest degree terms of $A$, which is where we will do our work. For a rational function $R$, let $d(R)$ be the minimum degree of $R$ considered as a power series when you expand $\mathbb{Z}(q) \hookrightarrow \mathbb{Z}[q^{-1},q]]$.  We choose to expand the power series allowing for infinite terms in the positive direction so that we get a minimum degree. 

The following are the three lemmas proved in \cite{DasArm}.  We will use these lemmas to prove an extension on their work. 

\begin{lemma}[\cite{DasArm}]
When we decrease $j_i$ from $n$ to $n-1$, we increase the minimum  $A$ degree of $\gamma(n,n,j_i)$ by $4n$, i.e.,
\begin{equation}d(\gamma(n,n,2(n-1)))=d(\gamma(n,n,2n))+4n.\end{equation}
As we continue to decrease each $j_i$, the minimum $A$ degree continues to increase, i.e.,
\begin{equation}d(\gamma(n,n,2(j-1))) \geq d(\gamma(n,n,2j)).\end{equation}
\label{gammalemma}
\end{lemma}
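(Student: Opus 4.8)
The plan is to prove this by a direct computation from the closed formula for the negative half-twist coefficient. The key observation is that $\gamma(a,b,c)$ is, up to an overall sign, a single monomial in $A$, so its minimum $A$-degree is just its $A$-exponent and no power-series expansion is needed. First I would substitute $a=b=n$ and $c=2j$ into $\gamma(a,b,c)=(-1)^{(a+b-c)/2}A^{\,a+b-c+(a^2+b^2-c^2)/2}$, obtaining
\begin{equation}
\gamma(n,n,2j)=(-1)^{n-j}\,A^{\,n^2+2n-2j-2j^2},
\end{equation}
so that $d(\gamma(n,n,2j))=n^2+2n-2j-2j^2$.

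Next I would take the first difference in $j$. Replacing $j$ by $j-1$ and simplifying gives $d(\gamma(n,n,2(j-1)))=n^2+2n+2j-2j^2$, hence
\begin{equation}
d(\gamma(n,n,2(j-1)))-d(\gamma(n,n,2j))=4j.
\end{equation}
Setting $j=n$ yields the first displayed equation of the lemma, an increase of exactly $4n$ when $j_i$ drops from $n$ to $n-1$. For the second (inequality) assertion I would note that $j$ ranges over $1\le j\le n$ --- precisely the values for which $(n,n,2j)$ is an admissible triple, so that the step from $2j$ to $2(j-1)$ stays inside the summation --- whence $4j\ge 4>0$ and $d(\gamma(n,n,2(j-1)))\ge d(\gamma(n,n,2j))$, in fact strictly.

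The only points needing care are bookkeeping ones: tracking the convention $A^{-4}=q$, so that an increase in minimum $A$-degree corresponds to a drop in highest $q$-degree (which is what later feeds the head estimate), and confirming the admissibility range of $j$. I do not expect a genuine obstacle here --- the entire content is the quadratic dependence of the $A$-exponent on $j$, which forces the increments to grow linearly --- and the substantive work in extending the argument of Armond and Dasbach will lie in the companion lemmas bounding $\Delta_{2j}/\theta(n,n,2j)$ and the trivalent graph evaluation $\Gamma_{n,(j_1,\ldots,j_k)}$, not in this one.
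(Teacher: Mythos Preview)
Your computation is correct and is exactly the natural argument: $\gamma(n,n,2j)$ is a single monomial in $A$, so one reads off $d(\gamma(n,n,2j))=n^2+2n-2j-2j^2$ from the formula and the first difference in $j$ is $4j$, giving $4n$ at $j=n$ and a strictly positive increment for all $j\ge 1$. The paper does not supply its own proof of this lemma; it is quoted from Armond--Dasbach \cite{DasArm}, and the direct calculation you give is precisely what that reference does.

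One small correction of phrasing: admissibility of $(n,n,2j)$ actually gives $0\le j\le n$, not $1\le j\le n$; the lower bound $j\ge 1$ you use comes instead from requiring that the \emph{target} $2(j-1)$ still be nonnegative. This does not affect the argument.
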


\begin{lemma}[\cite{DasArm}]
Each time we decrease the $j_i$, we increase the minimum $A$ degree of $\frac{\Delta_{2j}}{\theta(n,n,2j)}$ by 2, i.e.,
\begin{equation}d\left(\frac{\Delta_{2(j-1)}}{\theta(n,n,2(j-1))}\right)=d\left(\frac{\Delta_{2j}}{\theta(n,n,2j)}\right)+2.\end{equation}
\end{lemma}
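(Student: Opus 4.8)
The plan is to evaluate $\Delta_{2j}/\theta(n,n,2j)$ in closed form and read its minimum degree off directly, rather than tracking how each constituent factor changes step by step. Specializing the formulas above to the admissible triple $(a,b,c)=(n,n,2j)$ makes the three internal colors equal to $j$, $j$, and $n-j$, so that
\begin{equation}
\frac{\Delta_{2j}}{\theta(n,n,2j)}=(-1)^{\,n+j}\,\frac{[2j+1]!\,\big([n]!\big)^{2}}{[n+j+1]!\,\big([j]!\big)^{2}\,[n-j]!},
\end{equation}
where $[m]!=[1][2]\cdots[m]$ and $[0]!=1$. I would first verify this identity by direct substitution, checking the endpoint $j=0$ (where $\Delta_{0}=1$ and $\theta(n,n,0)=(-1)^{n}[n+1]$) separately, so that the conclusion covers every $0\le j\le n$ and hence every decrement.

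Next I would record that $d$ behaves as a valuation: fixing a one-sided expansion of each $\tfrac{1}{A^{2}-A^{-2}}$ with a well-defined lowest $A$-degree makes $d$ additive on products, with $d(R/S)=d(R)-d(S)$. From the elementary expansion $[m]=A^{2-2m}\big(1+A^{4}+\cdots+A^{4(m-1)}\big)$ we get $d([m])=2(1-m)$, and telescoping gives $d([m]!)=\sum_{r=1}^{m}2(1-r)=m-m^{2}$. Applying additivity to the displayed identity,
\begin{equation}
d\!\left(\frac{\Delta_{2j}}{\theta(n,n,2j)}\right)=d([2j+1]!)+2\,d([n]!)-d([n+j+1]!)-2\,d([j]!)-d([n-j]!),
\end{equation}
and substituting $d([m]!)=m-m^{2}$ and expanding as a polynomial in $j$ with $n$ fixed, the quadratic terms in $j$ cancel and one is left with $d\!\left(\frac{\Delta_{2j}}{\theta(n,n,2j)}\right)=2(n-j)$. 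Replacing $j$ by $j-1$ therefore raises this value by exactly $2$, which is precisely the lemma's claim; the closed form $2(n-j)$ is a useful byproduct for the later degree bookkeeping.

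The one point demanding care — more bookkeeping than obstacle — is the degree convention: this lemma and Lemma~\ref{gammalemma} are stated in terms of $A$-degrees, whereas $d$ was introduced via the embedding $\mathbb{Z}(q)\hookrightarrow\mathbb{Z}[q^{-1},q]]$, so one must be explicit that ``minimum degree'' here means the lowest power of $A$ in the chosen one-sided expansion (equivalently, the term carrying the highest power of $q$, which is what contributes to the head). Once that convention is pinned down the argument is the finite computation above, with no analytic content; so I expect no genuine difficulty, only the routine risk of a sign or quantum-factorial slip, which the endpoint checks are designed to catch.
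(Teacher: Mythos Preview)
Your argument is correct. The closed form
\[
\frac{\Delta_{2j}}{\theta(n,n,2j)}=(-1)^{n+j}\,\frac{[2j+1]!\,[n]!^{2}}{[n+j+1]!\,[j]!^{2}\,[n-j]!}
\]
follows from the stated formulas for $\Delta$ and $\theta$ with internal colors $(j,j,n-j)$, and substituting $d([m]!)=m-m^{2}$ and simplifying does give $d\bigl(\Delta_{2j}/\theta(n,n,2j)\bigr)=2(n-j)$, from which the increment of exactly $2$ when $j\mapsto j-1$ is immediate. Your caution about the $A$-versus-$q$ convention for $d$ is well placed and consistent with how the paper uses it.

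As for comparison: the paper does not supply its own proof of this lemma; it is quoted from Armond--Dasbach and used as a black box. So there is nothing in the present paper to compare your argument against. Your direct computation is self-contained and in fact delivers slightly more than the lemma asserts, namely the explicit value $2(n-j)$ rather than just the increment, which is useful for the later degree bookkeeping the paper carries out.
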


\begin{lemma}[\cite{DasArm}]
When we decrease $j_i$ from $n$ to $n-1$, we increase the minimum $A$ degree of $\Gamma_{n,(j_1,\ldots ,j_{i-1},j_i,j_{i+1},\ldots,j_k)}$ by at least 2, i,e.,
\begin{equation}d(\Gamma_{n,(n,\ldots ,n-1,\ldots,n)})\geq d(\Gamma_{n,(n,\ldots ,n,\ldots,n)})+ 2.\end{equation}
We can only guarantee this change of two at the first step.  As we continue to decrease $j_i$, the best we get is:
\begin{equation}d(\Gamma_{n,(j_1,\ldots ,j_{i-1},j_i-1,j_{i+1},\ldots,j_k)})\geq d(\Gamma_{n,(j_1,\ldots ,j_{i-1},j_i,j_{i+1},\ldots,j_k)})\pm 2.\end{equation}

\end{lemma}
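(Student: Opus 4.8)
The plan is to isolate the dependence of the closed trivalent graph $\Gamma_{n,\vec{j}}:=\Gamma_{n,(j_1,\ldots,j_k)}$ on the single varying label $2j_i$ by one recoupling move, and then read the resulting change in minimum $A$-degree off the explicit formulas for $\theta$ and for the tetrahedral coefficient $\mathrm{Tet}$ recorded in \cite{Mas}. Recall that only the label $2j_i$ on the edge $e_i$ produced by fusing the $i$th twist region varies; every other edge of $\Gamma$ keeps the label $n$, and the combinatorial type of $\Gamma$ stays fixed.

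First I would localize near $e_i$. Its two trivalent endpoints $v,w$ are each incident, besides $e_i$ itself, to exactly two edges labeled $n$ --- the strands entering and leaving that twist region --- so a small disk $D$ around $\{v,e_i,w\}$ meets the rest of $\Gamma$ in four $n$-colored strands. Inside $D$ the $2j_i$-edge sits in one of the two recoupling channels of a four-point $(n,n,n,n)$ tangle; passing to the other channel rewrites the contents of $D$ as a sum over the admissible internal colour $d$ of the opposite channel, weighted by the $6j$-symbol $c_d(j_i)$, which is $\mathrm{Tet}(n,n,2j_i;n,n,d)$ times a factor depending only on $d$ (a ratio of $\Delta$'s and $\theta$'s, as in \cite{Mas}). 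Carrying out this surgery produces a graph $\widetilde{\Gamma}_{n,d}$ --- still depending on the other $j_\ell$ ($\ell\neq i$), but no longer on $j_i$ --- so that
\begin{equation}
\Gamma_{n,\vec{j}}=\sum_{d}c_d(j_i)\,\widetilde{\Gamma}_{n,d},
\end{equation}
and hence $d(\Gamma_{n,\vec{j}})\ge\min_d\bigl(d(c_d(j_i))+d(\widetilde{\Gamma}_{n,d})\bigr)$, with equality unless the leading terms of the minimizing summands cancel. Only the factor $\mathrm{Tet}(n,n,2j_i;n,n,d)$ of $c_d(j_i)$ depends on $j_i$, so the full $j_i$-dependence of $d(\Gamma_{n,\vec{j}})$ is carried by $d\bigl(\mathrm{Tet}(n,n,2j_i;n,n,d)\bigr)$.

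Next I would estimate $d\bigl(\mathrm{Tet}(n,n,2j;n,n,d)\bigr)$ as a function of $j$, uniformly in $d$, from the single-sum formula for $\mathrm{Tet}$ in \cite{Mas}. For $j=n$ the triangle $(n,n,2n)$ lies at the edge of admissibility --- one of its internal colours, $(n+n-2n)/2$, vanishes --- and this turns out to be the case in which $d\bigl(\mathrm{Tet}(n,n,2j;n,n,d)\bigr)$ is smallest, with a sharp bound; decreasing $j$ to $n-1$ turns that internal colour into $1$ and raises the minimum $A$-degree by exactly $2$, independently of $d$, which (granting the no-cancellation claim below) yields the first inequality of the lemma. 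For $j<n$ the internal colour is larger, and the competing terms in the $\mathrm{Tet}$-sum together with the summation bounds it imposes let one conclude only that the minimum $A$-degree moves by an even amount bounded below by $-2$ at each further step; feeding this into the displayed identity gives the second inequality.

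The step I expect to be the main obstacle is controlling cancellation in the recoupling sum: since $d$ of a sum may strictly exceed the minimum of the summands' degrees, upgrading the lower bound to the clean ``$+2$'' at the first step requires knowing that the extremal summand --- the recoupling term closest to the all-$n$ configuration --- is not killed by cancellation against the others. This is exactly the kind of sign- and parity-positivity bookkeeping that underlies adequacy arguments for the colored Jones polynomial, and it is also the reason that, past the first step, one can only assert a one-sided estimate permitting a drop of $2$ rather than a genuine monotone increase.
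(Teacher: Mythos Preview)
The paper does not give its own proof of this lemma; it is quoted verbatim from \cite{DasArm} and used as input for Theorem~1.1, so there is no argument in the present paper to compare your proposal against. What the paper does record, however, is the \emph{style} of argument used in \cite{DasArm}: the later discussion (``we can see that again we are decreasing the number of circles'') indicates that the degree bounds on $\Gamma$ there come from resolving the Jones--Wenzl idempotents into Kauffman states and tracking the number of loops in the extremal (all-$A$ or all-$B$) state of the resulting adequate diagram. Decreasing $j_i$ by one removes one strand from the cable at that twist region, which changes the extremal circle count by exactly one at the first step (giving the $+2$ in $A$-degree) and by at most one thereafter (giving the $\pm 2$). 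That is a combinatorial, state-sum argument rather than an algebraic one.

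Your route via a single recoupling move is genuinely different. It has the virtue of making the $j_i$-dependence completely explicit through the factor $\mathrm{Tet}(n,n,2j_i;n,n,d)$, and in principle could yield sharper information than the circle count. But two of the steps you flag are real obstacles, not just bookkeeping. First, the assertion that $d\bigl(\mathrm{Tet}(n,n,2j;n,n,d)\bigr)$ jumps by \emph{exactly} $2$, uniformly in $d$, when $j$ drops from $n$ to $n-1$ is not obvious from the single-sum formula: the range of the summation index and the location of the dominant term both shift with $j$, and the dependence on $d$ does not visibly decouple. You would need to carry out that computation, not just invoke it. Second, and more seriously, your lower bound $d(\Gamma_{n,\vec{j}})\ge\min_d(\cdots)$ only yields the lemma if it is \emph{tight} at $j_i=n$; otherwise the comparison between the two minima says nothing about the actual degrees of $\Gamma$. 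Establishing that tightness is essentially the same problem as proving adequacy of the underlying skein element, which is exactly what the circle-counting argument in \cite{DasArm} does directly. So your approach, as it stands, defers the hard step rather than bypassing it.
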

Thus when we decrease a single $j_i$ from $n$ to $n-1$, we increase the lowest degree in $A$ by at least $(4n)m_i+2+2\geq 4n+4$ which means for the $n+1$ Colored Jones Polynomial, we decrease the $q$ degree by $n+1$ .  After this, the lowest degree in $A$ is non-decreasing as we continue to change the $j_i$ so the only term that contributes to the lowest $4n+4$ terms of the polynomial in $A$ are the ones where each $j_i=n$.

One can then show that the first $n+1$ coefficients only depend on the overall graph structure and not the number of twists.  This is the same information we lose when going to the reduced graph. This gives the proof of Theorem \ref{tailstab}.  It does not, however, tells us anything about the existence of the head and tail of alternating or adequate knots. It just tells us that if the head and tail exist, they only depend on the overall graph structure.

To show the head (tail) exists, we must show that the highest (lowest) $n+1$ coefficients of the evaluation of the $n$-trivalent graph is the same as those of the evaluation of the $n+1$-trivalent graph. Armond does this in \cite{Arm} by demonstrating a way to reduce the $n+1$-colored graph to the $n$-colored graph by peeling off one of the strands without changing the highest $n+1$ coefficients.

We can use similar techniques to those used by Armond and Dasbach to find which knots will have the same higher order stabilizing sequences.
 
 First, we state a direct corollary to the theorem above in the case where each of the twist coefficients is large. 

\begin{corollary} 
Let $m$ be the minimum of the $m_i$. When we change a single $j_i$ from $n$ to $n-1$, we increase the lowest degree in $A$ by at least $(4n)m+2+2 = 4nm+4$ and thus decrease the $q$ degree by $n(m)+1$ for the $(n+1)$ Colored Jones Polynomial.  In addition to the first $n+1$ terms only depending on the overall graph structure, the next $(m-1)n$ terms also depend only on the graph structure.
\end{corollary}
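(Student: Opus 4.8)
The plan is to rerun the Dasbach--Armond degree estimate that underlies Theorem \ref{tailstab}, simply refusing to throw away the twist exponents $m_i$ at the point where one replaces $(4n)m_i$ by $4n$. Starting from the fusion expansion
\[ J_{n+1,K}=\sum_{j_1,\ldots,j_k=0}^{n}\ \prod_{i=1}^{k}\gamma(n,n,2j_i)^{m_i}\,\frac{\Delta_{2j_i}}{\theta(n,n,2j_i)}\ \Gamma_{n,(j_1,\ldots,j_k)}, \]
I would fix a multi-index $(j_1,\ldots,j_k)\neq(n,\ldots,n)$, pick a coordinate $i$ with $j_i<n$, and bound the minimum $A$-degree of that summand below the one of the all-$n$ summand. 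By Lemma \ref{gammalemma}, passing from $j_i=n$ to $j_i=n-1$ raises $d\big(\gamma(n,n,2j_i)^{m_i}\big)$ by $4n\,m_i$, and further decreases of $j_i$ never lower it; by the second of the three lemmas above, $d\!\left(\frac{\Delta_{2j_i}}{\theta(n,n,2j_i)}\right)$ rises by $2$ at that first step; and by the third lemma the trivalent factor $\Gamma$ rises by at least $2$, with the remaining changes controlled exactly as in the proof of Theorem \ref{tailstab}. Since $m_i\ge m$, this yields a net increase of at least $4nm+2+2=4(nm+1)$ in the minimum $A$-degree, that is, a drop of at least $nm+1$ in the top $q$-degree, for every summand except the all-$n$ one.

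Hence the highest $nm+1$ coefficients (in $q$) of $J_{n+1,K}$ equal those of the single term
\[ \mathcal{T}_n:=\Big(\prod_{i=1}^{k}\gamma(n,n,2n)^{m_i}\Big)\,\frac{\Delta_{2n}}{\theta(n,n,2n)}\,\Gamma_{n,(n,\ldots,n)}. \]
A quick substitution in the formula for $\gamma$ shows $\gamma(n,n,2n)=A^{-n^{2}}$, so the product $\prod_i\gamma(n,n,2n)^{m_i}=A^{-n^{2}\sum_i m_i}$ is just an overall shift in $A$-degree and disappears once the colored Jones polynomial is normalized (equivalently, once we read the top coefficients as a sequence from the highest degree down). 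What is left, $\frac{\Delta_{2n}}{\theta(n,n,2n)}\,\Gamma_{n,(n,\ldots,n)}$, depends only on $n$ and on which pairs of gray regions of the diagram are adjacent --- i.e. on the reduced checkerboard graph --- because $\Gamma_{n,(n,\ldots,n)}$ is the evaluation of the trivalent graph produced by fusion and retains no information about how many crossings sat in each twist region. So the highest $nm+1$ normalized coefficients of $J_{n+1,K}$ are determined by the reduced graph alone, up to a global sign; the first $n+1$ of them are the Dasbach--Armond head, and the assertion of the corollary is exactly that the next $(m-1)n$ are graph-determined as well, since $(n+1)+(m-1)n=nm+1$.

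The computation is routine, and the only place that needs genuine care is the multi-coordinate bookkeeping: one must check that a multi-index with several coordinates dropped below $n$ still gains at least $4(nm+1)$ in $A$-degree, not merely the $4(n+1)$ one gets from a single drop. For the $\gamma$ and $\Delta/\theta$ factors this is automatic, since both factor over $i$ and each coordinate with $j_i<n$ contributes at least $4nm_i\ge 4nm$ and at least $2$ respectively; for the $\Gamma$ factor one reuses verbatim the estimate from the proof of Theorem \ref{tailstab}, where the combined contribution of the $\Delta/\theta$ and $\Gamma$ factors over all the decreased coordinates is already shown to be bounded below, so the total can never fall back to the all-$n$ value. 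Thus the sole new ingredient beyond \cite{DasArm} is the inequality $m_i\ge m$, which is why the statement is packaged as a corollary rather than a theorem.
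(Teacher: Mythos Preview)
Your argument is correct and is exactly the reasoning the paper has in mind: the corollary is stated there as a ``direct corollary'' with no separate proof, since it amounts to rereading the sentence just above it---``we increase the lowest degree in $A$ by at least $(4n)m_i+2+2\geq 4n+4$''---and replacing the crude bound $m_i\ge 1$ by $m_i\ge m$. Your handling of the multi-coordinate case (the $\Delta/\theta$ gain of $+2$ compensating the possible $-2$ from $\Gamma$ so that the combined degree is nondecreasing after the first drop) is precisely the paper's own bookkeeping; the only cosmetic slip is that the all-$n$ summand carries $\big(\frac{\Delta_{2n}}{\theta(n,n,2n)}\big)^{k}$ rather than a single such factor, which is harmless since that quantity depends only on $n$ (indeed equals $1$).
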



There is also a stronger theorem, which was stated in the introduction.  It deals with the case where some of the twist coefficients are large but others are small. We restate the theorem here and then proceed with the proof.  \\

 \noindent \textbf{Theorem 1.1.} \emph{If $K_1$ and $K_2$ are alternating knots whose alternating diagrams have the same $m+1$-reduced B-checkerboard graph structure, then the highest (m+1)N coefficients (in $q$) of $J_{N,K_1}$ are the same as the first $(m+1)N$ coefficients of $J_{N,K_2}$, up to possible rescaling by $\pm 1$.}\\

\begin{proof}
Let $b$ be the number of pairs of vertices in the B-checkerboard graph that have more than $m$ parallel edges. Relabel the graph such that $j_1, \cdots, j_b$ correspond to these high order parallel edges. By Lemma \ref{gammalemma}, when any of these $j_i$ decrease from $N$ to $N-1$, there is an overall decrease in the maximum degree of $(m+1)(N)$. Since the other terms in the sum do not increase the maximum degree, we also know that the degrees continue to decrease as we decrease the value of any $j_i$. Thus to contribute to the highest $(m+1)N$ coefficients, and thus contribute to $\Phi_m(q)$, each of $j_1,\dots,j_b$ needs to be labeled $N$.  This means we can write the colored Jones polynomial as   
\begin{align}
J_{N+1,K}&\stackrel{\phantom{\cdot (m+1)N}}{=}\sum_{j_1,\ldots j_k=0}^{N}\prod_{i=1}^{k}\gamma(N,N,2j_i)^{m_i}\frac{\Delta_{2j_i}}{\theta(N,N,2j_i)}\Gamma_{N,(j_1,\ldots ,j_k)}& \nonumber \\
&\stackrel{\cdot (m+1)N}{=} \gamma(N,N,2N)^{\sum_{i=1}^b m_i} & \nonumber \\ 
&\qquad \times \sum_{j_{b+1},\ldots, j_k=0}^{N}\prod_{i=b+1}^{k}\gamma(N,N,2j_i)^{m_i}\prod_{i=1}^{k}\frac{\Delta_{2j_i}}{\theta(N,N,2j_i)}\Gamma_{N,(N,\ldots,N,j_{b+1},\ldots ,j_k)}&
\end{align}
Again, since $\gamma(N,N,2N)$ only contributes an overall shift, we get the same highest $(m+1)N$ coefficients regardless of the values of $m_1, \dots, m_b$ and thus knots with the same $m+1$-reduced graph structure have the same highest $(m+1)N$ coefficients. \\

\end{proof}

\section{Knots which reduce to a triangle graph}
\label{trianglegraph}
 \begin{figure}[ht]
 \centering
 \includegraphics[width=.25\textwidth]{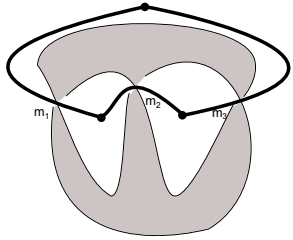}
  \caption{A trefoil knot with its checkerboard graph.}
 \label{treftail}
 \end{figure}
In this section, we will find $T_1$, the ``neck" of the colored Jones polynomial of knots whose reduced checkerboard graph is a triangle graph. We define $T_1$ to be the polynomial which when added in the right way to $T_0$, the head, will have the property that its highest order $2N+1$ terms agree with the highest order $2N+1$ terms of the $N+1$ colored Jones polynomial.   The knots we will focus on can be drawn like the trefoil in Fig. \ref{treftail}, except we will have more crossings below the pictured crossings  (and thus more parallel edges before we reduce the graph).  The $m_i$ represent the number of crossings in each section.  As it is drawn, each $m_i=1$.  If $m_1=2$ and the others are 1, we get the figure 8 knot.

As before, we find the colored Jones polynomial by doing fusion and removing negative twists. 

\begin{align}
J_{N+1,K}(q)	&=\left\langle \raisebox{-.5\height}{\includegraphics[width=.15\textwidth]{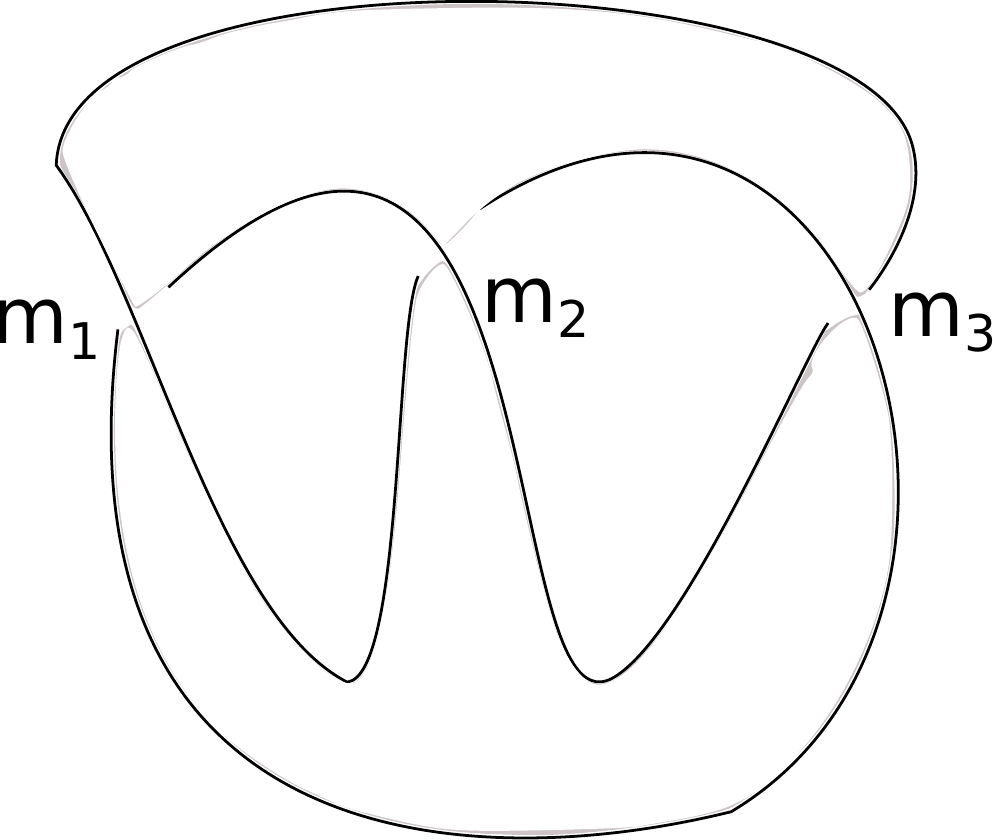}}\right\rangle \nonumber \\
				&=\sum_{j_i=0}^N\prod_{i=1}^3 \frac{\Delta_{2j_i}}{\theta(N,N,2j_i)}\left\langle \raisebox{-.5\height}{\includegraphics[width=.15\textwidth]{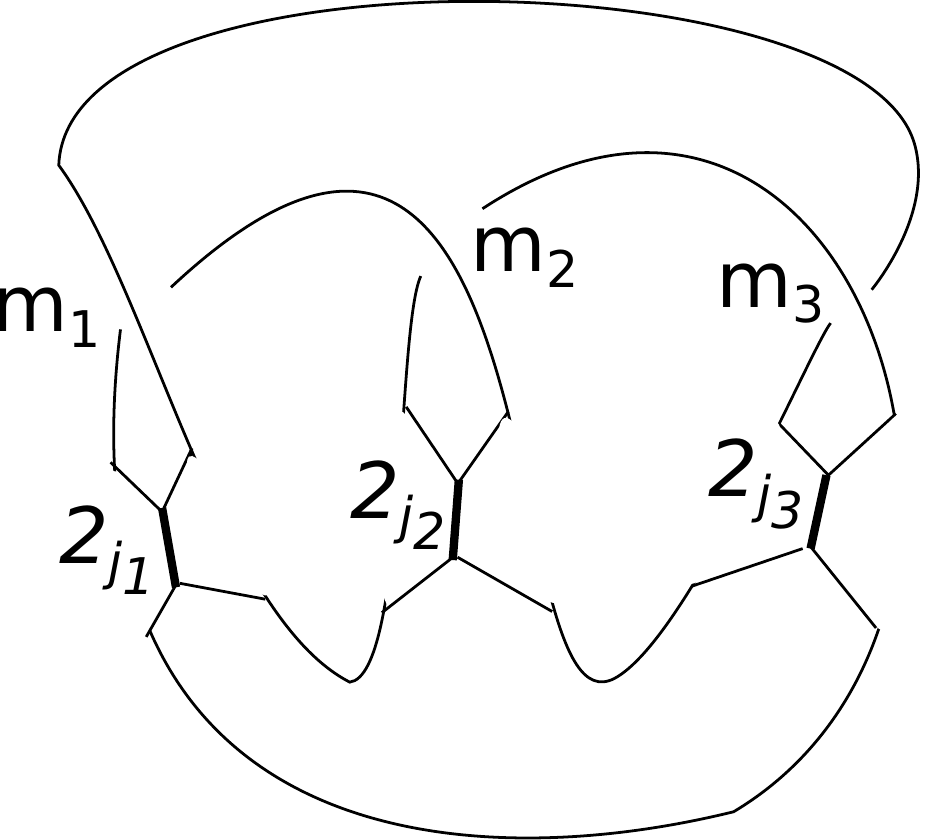}}\right\rangle \nonumber \\
				&=\sum_{j_i=0}^N\prod_{i=1}^3 \gamma(N,N,2j_i)^{m_i}\frac{\Delta_{2j_i}}{\theta(N,N,2j_i)}\left\langle \raisebox{-.5\height}{\includegraphics[width=.15\textwidth]{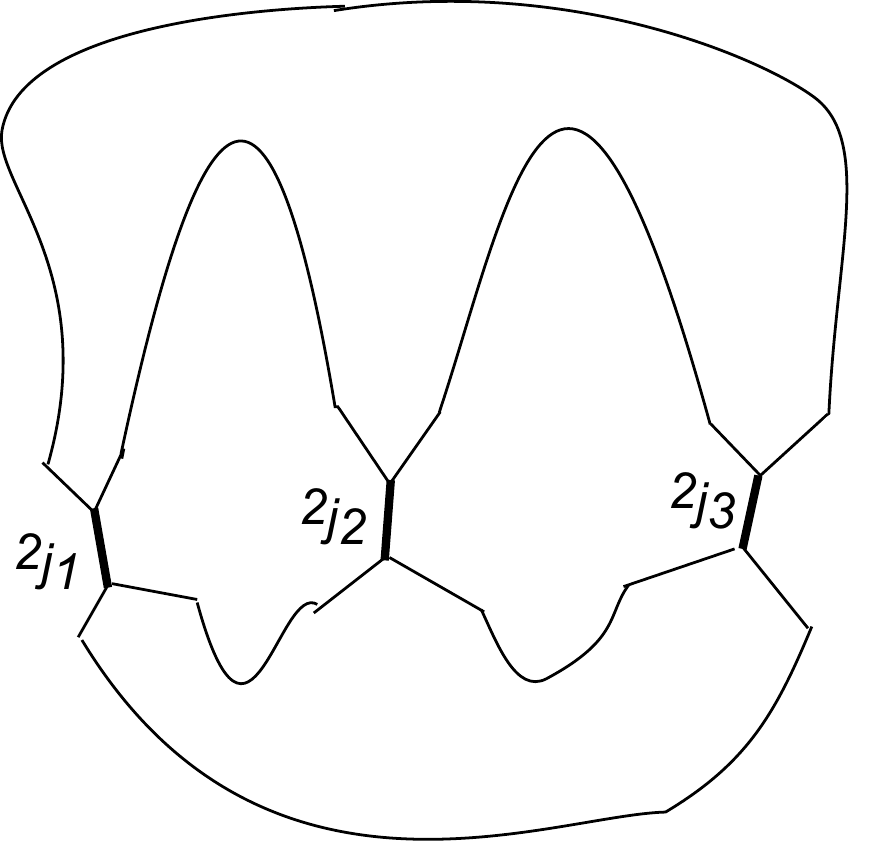}}\right\rangle \nonumber \\
				&=\sum_{j_i=0}^N\prod_{i=1}^3 \gamma(N,N,2j_i)^{m_i}\frac{\Delta_{2j_i}}{\theta(N,N,2j_i)}\Gamma_{N,(j_1,j_2,j_3)}
\end{align}
where
\begin{equation}\Gamma_{N,(j_1,j_2,j_3)}=\left\langle \raisebox{-.5\height}{\includegraphics[width=.2\textwidth]{treftailneck3.pdf}}\right\rangle=\left\langle \raisebox{-.5\height}{\includegraphics[width=.2\textwidth]{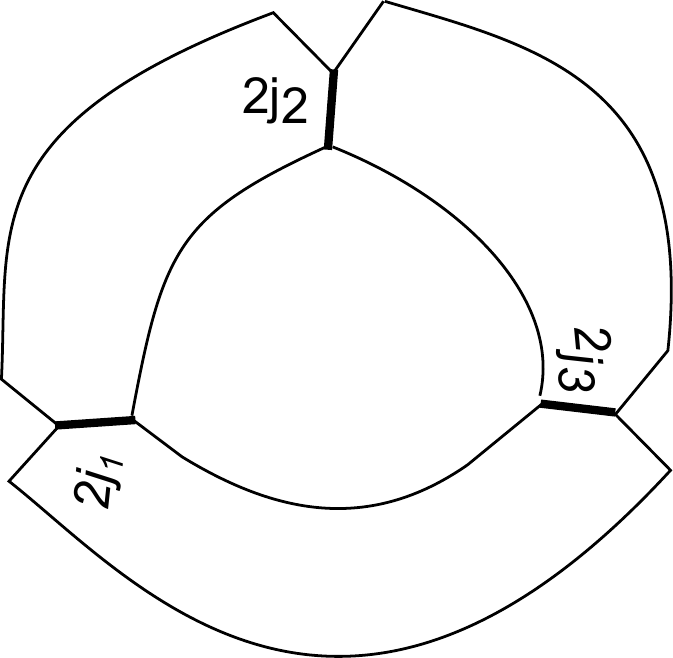}}\right\rangle \end{equation}
Now, compare this diagram to the diagram in Fig. \ref{gammaxyz}. Following \cite{Lic}, we denote the evaluation of the graph in Fig. \ref{gammaxyz} as $\Gamma(x,y,z)$.
\begin{figure}
\centering 
\includegraphics[width=.3\textwidth]{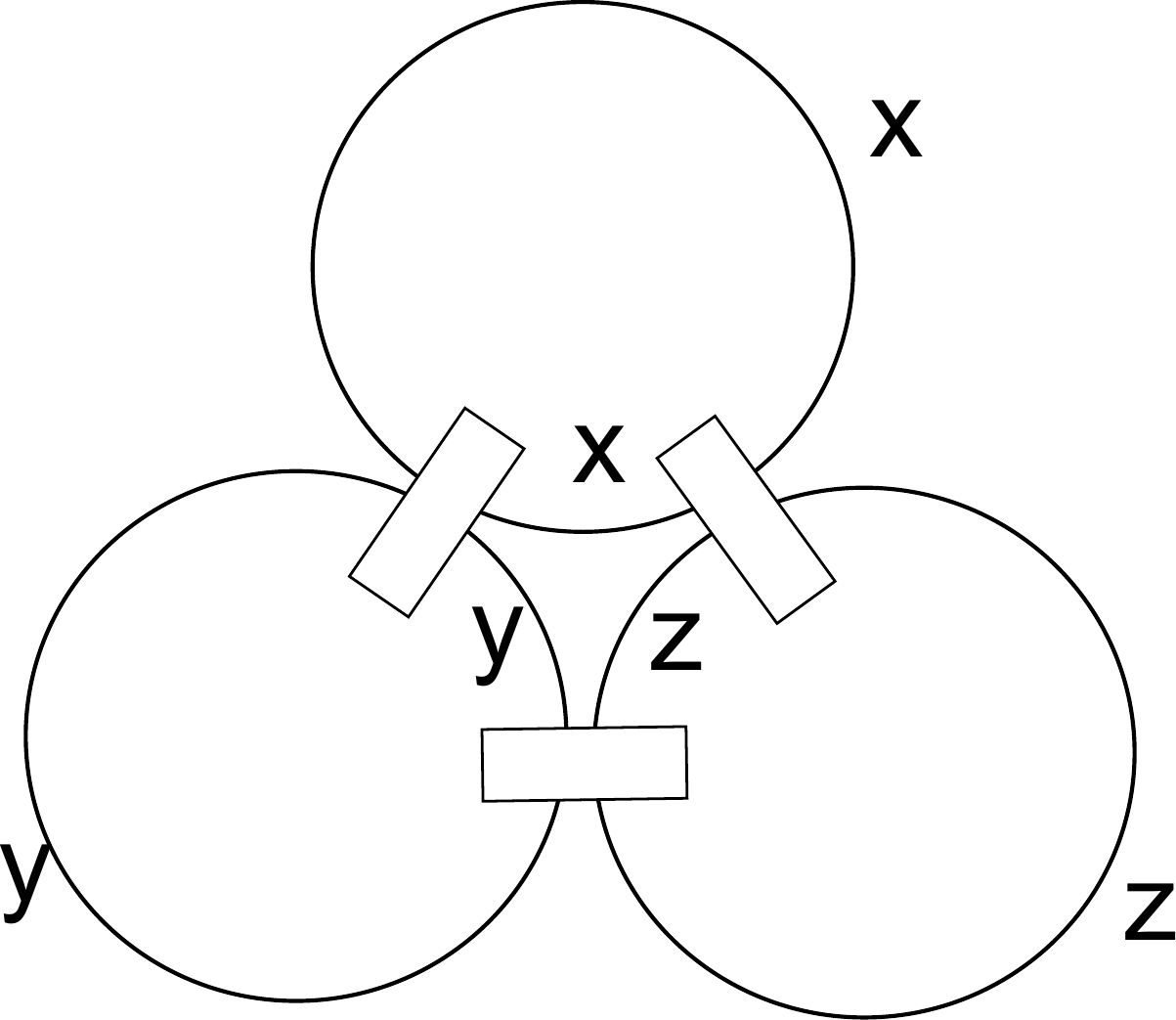}
 \caption{This diagram has $x$ parallel copies of one circle, $y$ of another and $z$ of a third. They are joined by the $x+y, y+z,$ and $x+z$ idempotents. Its evaluation is $\Gamma(x,y,z)$}
\label{gammaxyz}
\end{figure}

We are interested in finding the highest order $2N+1$ coefficients (in the variable $q$) of the $N+1$ colored Jones polynomial. To find the highest order coefficients of the polynomial in $q$, we need to find the lowest order in $A$ since $q=A^{-4}$ First, we prove we only need to consider the cases where either all $j_i=N$ or where exactly one $j_i=N-1$. Then, we will look at $\Gamma_{N,(j_1,j_2,j_3)}$ in these two cases.

Consider $J_{N+1,K}(q)$. We already saw that the lowest order terms in $A$, and thus highest order terms in $q$, come from the summand where all $j_i$ are $N$. We also saw that when we decrease a single $j_1$ from $N$ to $N-1$, we decrease the maximum degree (in $q$) by $(N+1)$. And hence the $(N,N,N)$ labeled term is the only one that contributes to the head. Now, we want to find the next $N$ terms. We know that the terms with one $N-1$ and two $N$ labelings count toward these terms, but we want to know if any other terms contribute.  Decreasing a different $N$ to $N-1$ will again decrease the maximum degree by $N+1$ and thus not contribute to the next $N$ terms.  

If we decrease the $N-1$ label to $N-2$, we decrease the maximum degree coming from $\gamma(N,N,2j)$ by $N-1$. In general, when decreasing a general $j$ to $j-1$ the $\frac{\Delta_{2j_i}}{\Theta(N,N,2j_i)}$ term and $\Gamma_{N,(j_1,j_2,j_3)}$ term may cancel each other's changes since $\Gamma_{N,(j_1,j_2,j_3)}$ changes by $\pm 2$. However, by comparing the specific $\Gamma_{N,(N,N,N-1)}$ and $\Gamma_{N,(N,N,N-2)}$ in this case, we can see that again we are decreasing the number of circles and thus these together decrease the maximum degree (in $q$) by 1.  We can conclude that the $(N,N,N-2)$ terms also does not contribute to highest $2N+1$ coefficients.   Since the maximum degrees (in $q$) continues to decrease as we decrease the labels, the $(N,N,N)$ and $(N-1,N,N)$ terms (up to permutation) are the only ones we need to consider.

Now, we find the evaluation of the graph in each of these cases. In the case where each $j_i$ is N, it is easy to see that \begin{equation}\Gamma_{N,(N,N,N)}=\Gamma(N,N,N).\end{equation}

In the case where one is $N-1$, we can expand the fusion into the idempotents, see Fig. \ref{treftailneckjisN1}.
\begin{figure}[h]
\centering
\includegraphics[width=.3\textwidth]{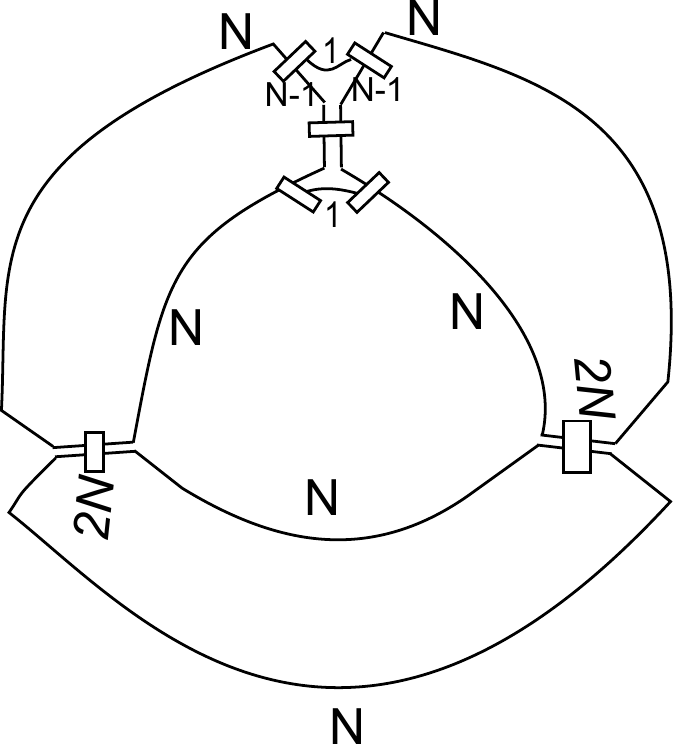}
 \caption{This diagram shows the expansion of the fusion with $j_i=N-1$ into the idempotent form.}
\label{treftailneckjisN1}
\end{figure}
The $N$ idempotents can be absorbed into the $2N$ idempotents, see Fig. \ref{treftailneckjisN12}.  We can then pull the outer and inner single strand down.  Doing this we can see that \begin{equation}\Gamma_{N,(N-1,N,N)}=\Gamma(N+1,N-1,N-1)).\end{equation}
\begin{figure}[h]
\centering
\includegraphics[width=.3\textwidth]{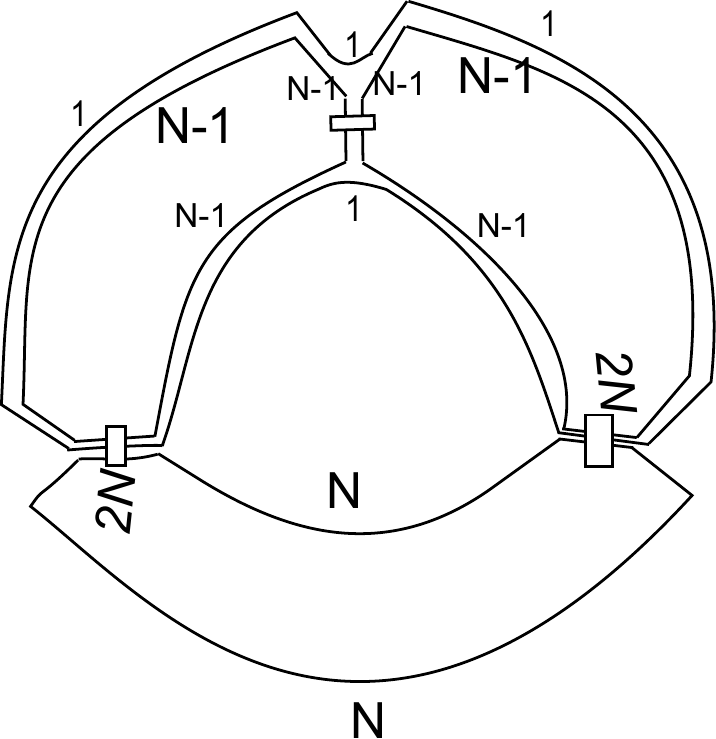}
 \caption{We can get rid of the $N$ idempotents.  Then after moving the 1 strands, we can see that $\Gamma_{N,(N-1,N,N)}=\Gamma(N+1,N-1,N-1).$ }
\label{treftailneckjisN12}
\end{figure}

 \subsection{Finding an expression for $T_1$}
 
 Recall, if two polynomials, $f(q)$ and $g(q)$ have the same coefficients for the $n$ highest order terms, we will write \begin{equation} f(q)\stackrel{\cdot n}{=}g(q). \end{equation} This is also used to mean that the lowest degree terms in $A$ match for $4n$ terms. For notational convenience, if $f(q)$ is a Laurent polynomial (or has a power series representation as a Laurent polynomial whose $q$ terms are bounded), let $f_{n}(q)$ represent a polynomial whose highest $n$ terms agree with $f(q).$

\begin{lemma}
If $f(q)=\frac{g(q)}{h(q)}$ then $f_n(q)\stackrel{\cdot n}{=}\frac{g_{n}(q)}{h_{n}(q)}$.
\label{divlemma}
\end{lemma}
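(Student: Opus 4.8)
The plan is to work with the convention that "highest $n$ terms in $q$" means "lowest $4n$ terms in $A$", so that everything takes place in the ring $\mathbb{Z}[A^{-1}][[A]]$ of Laurent series in $A$ bounded below in degree, where "$\stackrel{\cdot n}{=}$" becomes the honest statement of congruence modulo the ideal generated by $A^{d+4n}$, with $d$ the minimum $A$-degree of the series in question. In that language the claim is purely algebraic: if $f = g/h$, then knowing $g$ and $h$ only up to their first $4n$ low-order coefficients determines $f$ up to its first $4n$ low-order coefficients. First I would reduce to the case where $h$ has a unit constant term, i.e. $d(h)=0$ and the coefficient of $A^{0}$ in $h$ is $\pm 1$: we can always factor $h(A) = c A^{d(h)}(1 + A\,\tilde h(A))$ with $\tilde h$ a power series, and a monomial prefactor $cA^{d(h)}$ shifts degrees uniformly and so commutes with truncation, while passing to $1/h$ is the same as passing to $1/(1+A\tilde h)$ up to that monomial.

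Next I would invoke the geometric series expansion: for $H(A) = 1 + A\tilde h(A)$, we have $H(A)^{-1} = \sum_{k\ge 0} (-1)^k (A\tilde h(A))^k$, and the $k$-th term lies in $A^{k}\mathbb{Z}[[A]]$, so only the terms with $k < 4n$ can affect the coefficients of $A^{0},\dots,A^{4n-1}$ in $H^{-1}$, and each such term depends only on the coefficients of $A^{0},\dots,A^{4n-1}$ in $\tilde h$, hence only on those of $h$. Therefore $h_{n}(q)$ determines $(1/h)$ truncated to $n$ terms, i.e. $(1/h)_n \stackrel{\cdot n}{=} (1/h_n)_n$. The key general principle I would isolate as a sub-observation — and which I would state once and reuse — is that truncation is multiplicative in this setting: if $a \equiv a'$ and $b \equiv b'$ modulo $A^{d+4n}$ (with appropriate bottom degrees), then $ab \equiv a'b'$ modulo $A^{(d_a+d_b)+4n}$, because the low-order coefficients of a product of two series are determined by the correspondingly low-order coefficients of the factors. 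Granting that, $f = g \cdot h^{-1}$ truncated to $n$ terms depends only on $g$ truncated to $n$ terms and $h^{-1}$ truncated to $n$ terms, and the latter depends only on $h$ truncated to $n$ terms; chaining these gives $f_n \stackrel{\cdot n}{=} g_n / h_n$ (where the right side is itself understood up to its top $n$ terms, as the notation $f_n$ allows).

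The step I expect to be the only real (if minor) obstacle is bookkeeping the degree shifts correctly: $g$, $h$, and $f$ need not have minimum $A$-degree zero, and one must check that the number "$4n$" of guaranteed matching low-order $A$-coefficients is not eroded when one multiplies series with nonzero bottom degrees or inverts $h$. This is handled by the factorization $h = cA^{d(h)}H$ above together with the multiplicativity observation, which shows the bottom degree of $f$ is exactly $d(g) - d(h)$ and that the first $4n$ coefficients above it are controlled; there is no loss because multiplying by a monomial is exact and the geometric series for $H^{-1}$ only ever moves information upward in degree. I would also remark that this lemma is exactly the tool needed downstream to evaluate the closed-form ratios of $\Delta$, $\theta$, and $\Gamma$ appearing in the colored Jones sum modulo the relevant truncation, which is why it is stated in this generality.
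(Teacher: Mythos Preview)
Your argument is correct, but it is considerably more elaborate than the paper's. The paper's proof is a single line: from $f\,h=g$ and the observation that the top $n$ coefficients of a product are determined by the top $n$ coefficients of the factors, one gets $f_n\,h_n \stackrel{\cdot n}{=} g_n$, hence $f_n \stackrel{\cdot n}{=} g_n/h_n$. You instead first invert $h$ explicitly via the geometric series, prove that $(1/h)_n$ depends only on $h_n$, and then apply the multiplicativity principle to $f = g\cdot h^{-1}$. Both routes rest on the same underlying fact (your ``sub-observation'' that truncation is multiplicative), but the paper applies it directly to $fh=g$ rather than detouring through an explicit inverse.

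What your approach buys is a bit more transparency about why division behaves well: you actually exhibit $(1/h)$ as a convergent series whose low-order terms are controlled, and you are careful about the degree bookkeeping (the monomial factor $cA^{d(h)}$, the bottom degrees lining up). The paper's version is terser but leaves that bookkeeping implicit; indeed its one sentence even contains a harmless slip (it says the top $n$ terms of $f$ and $g$ determine those of $h$, when of course it is $f$ and $h$ that determine $g$). Your caveat that the leading coefficient of $h$ be a unit is a genuine hypothesis over $\mathbb{Z}$, but it is satisfied in every application in the paper since all the quantum-integer products involved have leading coefficient $\pm 1$.
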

\begin{proof} [Proof of \ref{divlemma}]

If $f(q)=\frac{g(q)}{h(q)}$ then $f(q)h(q)=g(q)$. Since the highest $n$ terms of $f(q)$ and $g(q)$ determine the highest $n$ terms of $h(q)$, $f_{n}(q)h_{n}(q)=g_{n}(q)$ so   $f_{n}(q)=\frac{g_{n}(q)}{h_{n}(q)}$.
\end{proof}

In order to find $T_1$, we will reduce our formula for the $N+1$ colored Jones polynomial as much as possible while keeping the highest $2N+1$ coefficients the same. To help keep our calculations as clear as possible, we will first prove a few lemmas.

\begin{lemma}
\begin{equation}\{2N\}! \stackrel{\cdot 2N+1}{=}(-1)^Nq^{\frac{3N^2+N}{4}} \{N\}!\left(1-\frac{q^{-N-1}}{1-q^{-1}}\right) \end{equation}
\label{2nlemma}
\end{lemma}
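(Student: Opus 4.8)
The plan is to expand both sides in the variable $A$ (equivalently, as power series in $q^{-1}$ after the substitution $q=A^{-4}$), compare lowest-$A$-degree behavior, and verify agreement to the required order by a direct manipulation of the quantum factorials. Recall $[n] = \{n\}/\{1\}$ with $\{n\} = A^{2n}-A^{-2n}$, so $\{n\}! = \prod_{m=1}^{n}(A^{2m}-A^{-2m})$. The first step is to factor out the dominant power of $A$ from $\{2N\}!$: write
\begin{equation}
\{2N\}! = \prod_{m=1}^{2N}\left(A^{2m}-A^{-2m}\right) = \prod_{m=1}^{2N} A^{2m}\left(1-A^{-4m}\right) = A^{2\binom{2N+1}{2}}\prod_{m=1}^{2N}\left(1-q^{m}\right),
\end{equation}
and similarly $\{N\}! = A^{N(N+1)}\prod_{m=1}^{N}(1-q^{m})$. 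Since looking at the highest $2N+1$ coefficients in $q$ means looking at the lowest $4(2N+1)$ terms in $A$, and an overall monomial in $A$ (which here works out to the claimed $(-1)^N q^{(3N^2+N)/4}$ up to the power-of-$A$ bookkeeping) does not affect which coefficients agree, the content of the lemma is the statement
\begin{equation}
\prod_{m=1}^{2N}\left(1-q^{m}\right) \stackrel{\cdot\, 2N+1}{=} \prod_{m=1}^{N}\left(1-q^{m}\right)\left(1-\frac{q^{-N-1}}{1-q^{-1}}\right),
\end{equation}
read with the appropriate reflection $q \leftrightarrow q^{-1}$ so that everything is a power series in $q^{-1}$.

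The key step is then to split $\prod_{m=1}^{2N}(1-q^m) = \prod_{m=1}^{N}(1-q^m)\cdot\prod_{m=N+1}^{2N}(1-q^m)$ and analyze the second factor modulo terms that do not affect the top $2N+1$ coefficients. After pulling out the monomial, the factor $\prod_{m=N+1}^{2N}(1-q^{-m})$ (in the reflected variable) has the form $1 - (q^{-(N+1)} + q^{-(N+2)} + \cdots + q^{-2N}) + (\text{higher order in } q^{-1})$, and the truncated geometric series $q^{-(N+1)}+\cdots+q^{-2N}$ agrees with $\frac{q^{-N-1}}{1-q^{-1}}$ up to terms of order $q^{-2N-1}$ and lower. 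Since multiplying by $\prod_{m=1}^{N}(1-q^{-m})$ (which starts at $1$) preserves agreement to a given order, and since the dropped quadratic-and-higher terms of $\prod_{m=N+1}^{2N}(1-q^{-m})$ first contribute at $q$-degree at least $2(N+1) > 2N+1$ below the top, both sources of error sit safely beyond the $2N+1$ coefficients we are tracking. Assembling these estimates via Lemma \ref{divlemma} (to handle the division by $1-q^{-1}$) gives the claimed congruence, and carrying the power-of-$A$ / sign bookkeeping back through the substitution $q=A^{-4}$ produces the explicit prefactor $(-1)^N q^{(3N^2+N)/4}\{N\}!$.

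The main obstacle I expect is purely the bookkeeping: getting the exponent $\frac{3N^2+N}{4}$ and the sign $(-1)^N$ exactly right requires carefully tracking the total $A$-degree $2\binom{2N+1}{2} - N(N+1) = 2N(2N+1) - N(N+1) = 3N^2+N$ extracted from the ratio $\{2N\}!/\{N\}!$, then dividing by $-4$ from $q = A^{-4}$ and accounting for the reflection $q \to q^{-1}$ that converts lowest-$A$-degree statements into highest-$q$-degree statements, which is where the sign $(-1)^N$ and the precise normalization of the $\left(1 - \frac{q^{-N-1}}{1-q^{-1}}\right)$ factor must be reconciled. The conceptual content — that $\prod_{m=N+1}^{2N}(1-q^m)$ contributes exactly $1$ minus a truncated geometric series to the relevant order — is straightforward; the risk is an off-by-one or sign slip in the monomial, which I would pin down by checking the identity directly for $N=1$ and $N=2$.
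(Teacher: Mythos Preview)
Your approach is essentially the same as the paper's: both split $\{2N\}! = \{N\}!\cdot\prod_{i=1}^{N}\{N+i\}$, expand the second product keeping only the ``all dominant terms'' and ``exactly one subdominant term'' contributions, and identify the resulting finite sum $\sum_{i=1}^{N}q^{-(N+i)}$ with $\tfrac{q^{-N-1}}{1-q^{-1}}$ modulo $q^{-2N-1}$. The only wrinkle is that your factorization $\{m\}=A^{2m}(1-q^{m})$ pulls out the wrong dominant monomial for a highest-$q$-degree analysis (you want $\{m\}=-q^{m/2}(1-q^{-m})$ so that the residual factor is a genuine power series in $q^{-1}$); you patch this with the ``appropriate reflection $q\leftrightarrow q^{-1}$,'' which is fine but would be cleaner if done from the outset, and the appeal to Lemma~\ref{divlemma} is not actually needed.
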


\begin{proof}
 We expand the higher terms in the factorial.  In the

\noindent $\left(q^{-(N+i)/2} - q^{(N+i)/2}\right)$ terms changing from the $-q^{(N+i)/2}$ term to the $q^{-(N+i)/2}$ term decreases the degree by $N+i$. Thus, we can either not do this at all or only do this once. We refer to the $q^{-(N+i)/2}$ term as a $q^-$ term. 

\begin{align}
\{2N\}!&=\{2N\}\{2N-1\}\cdots\{N+1\}\{N\}! \nonumber \\
				&= (q^{-(N+N)/2}-q^{(N+N)/2})\cdots (q^{-(N+1)/2}-q^{(N+1)/2})\{N\}! \nonumber \\
				&\stackrel{\cdot 2N+1} {=}  (-1)^N q^{\sum_{i=1}^{N}(N+i)/2} \left(\underbrace{\{N\}!}_{\text{no $q^-$ terms}} - \underbrace{\sum_{i=1}^{N} q^{-(N+i)}\{N\}!}_{\text{one $q^-$ term}}\right) \nonumber \\
				&= (-1)^Nq^{\frac{3N^2+N}{4}}\{N\}!\left(1 -\sum_{i=1}^{N} q^{-N-i}\right) \nonumber \\
				&= (-1)^Nq^{\frac{3N^2+N}{4}}\{N\}!\left(1 -\frac{q^{-N-1}-q^{-2N-1}}{1-q^{-1}}\right) \nonumber \\
				&= (-1)^Nq^{\frac{3N^2+N}{4}}\{N\}!\left(1 - \frac{q^{-N-1}}{1-q^{-1}}\underbrace{+\frac{q^{-2N-1}}{1-q^{-1}}}_{\text{no effect on highest $2N+1$ terms}}\right) \nonumber \\
				&\stackrel{\cdot 2N+1} {=}  (-1)^Nq^{\frac{3N^2+N}{4}}\{N\}!\left(1 -\frac{q^{-N-1}}{1-q^{-1}}\right)
\end{align}
 This concludes the proof.

\end{proof}

Now using this lemma, we can get an expression for the first $2N+1$ terms of $\{2N\}!^2$.
\begin{corollary}
\begin{equation}\left(\{2N\}!\right)^2 \stackrel{\cdot 2N+1}{=} q^{\frac{3N^2+N}{2}} \{N\}!^2\left(1-\frac{2q^{-N-1}}{1-q^{-1}}\right) \end{equation}
\label{2n2cor}
\end{corollary}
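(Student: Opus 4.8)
The plan is to square the congruence from Lemma \ref{2nlemma} directly. By that lemma we have
\begin{equation*}
\{2N\}! \stackrel{\cdot 2N+1}{=} (-1)^N q^{\frac{3N^2+N}{4}} \{N\}!\left(1-\frac{q^{-N-1}}{1-q^{-1}}\right),
\end{equation*}
so squaring both sides should give the claimed formula for $\left(\{2N\}!\right)^2$. First I would note that squaring is compatible with the relation $\stackrel{\cdot 2N+1}{=}$: if $f \stackrel{\cdot n}{=} g$, then $f^2 \stackrel{\cdot n}{=} g^2$, because the highest $n$ terms (equivalently, lowest $4n$ terms in $A$) of a product depend only on the highest $n$ terms of each factor. (This is the same principle already used implicitly throughout, and is essentially Lemma \ref{divlemma} run in the multiplicative direction.) So I can replace $\{2N\}!$ by the right-hand side above and expand.

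Next I would carry out the algebra. Squaring the scalar prefactor gives $(-1)^{2N} q^{\frac{3N^2+N}{2}} = q^{\frac{3N^2+N}{2}}$ and squaring $\{N\}!$ gives $\{N\}!^2$, matching the prefactors in the corollary. The remaining factor is
\begin{equation*}
\left(1-\frac{q^{-N-1}}{1-q^{-1}}\right)^2 = 1 - \frac{2q^{-N-1}}{1-q^{-1}} + \frac{q^{-2N-2}}{(1-q^{-1})^2}.
\end{equation*}
The key observation is that the last term, $\frac{q^{-2N-2}}{(1-q^{-1})^2}$, when expanded as a power series in $q^{-1}$, has all its monomials of degree $-2N-2$ or lower. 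After multiplying through by the prefactor $q^{\frac{3N^2+N}{2}}\{N\}!^2$ — whose top degree contribution is at $q^{\frac{3N^2+N}{2}}$ times the top of $\{N\}!^2$ — one checks that this term lands below the window of the highest $2N+1$ coefficients, exactly as the analogous $\frac{q^{-2N-1}}{1-q^{-1}}$ term was discarded in the proof of Lemma \ref{2nlemma}. Hence it can be dropped under $\stackrel{\cdot 2N+1}{=}$, leaving
\begin{equation*}
\left(\{2N\}!\right)^2 \stackrel{\cdot 2N+1}{=} q^{\frac{3N^2+N}{2}} \{N\}!^2\left(1-\frac{2q^{-N-1}}{1-q^{-1}}\right),
\end{equation*}
which is the statement.

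The only step requiring genuine care — and hence the main obstacle — is the degree bookkeeping that justifies discarding the $\frac{q^{-2N-2}}{(1-q^{-1})^2}$ term: one must confirm that, relative to the leading term of $q^{\frac{3N^2+N}{2}}\{N\}!^2\cdot 1$, this contribution is shifted down by strictly more than $2N$ in $q$-degree (equivalently, more than $8N$ in the $A$-variable once the $\{N\}!^2$ common factor is accounted for on both sides). Since $\frac{q^{-N-1}}{1-q^{-1}}$ contributes terms starting at degree $-N-1$ and $\frac{q^{-2N-2}}{(1-q^{-1})^2}$ starts at degree $-2N-2 < -(2N+1)+0$, the cutoff for the highest $2N+1$ coefficients is cleared, and the same reasoning that retired the tail term in Lemma \ref{2nlemma} applies verbatim here. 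Everything else is the routine expansion of a square.
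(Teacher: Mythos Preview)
Your proof is correct and follows essentially the same route as the paper: square the congruence from Lemma \ref{2nlemma}, expand $\left(1-\frac{q^{-N-1}}{1-q^{-1}}\right)^2$, and discard the $\frac{q^{-2N-2}}{(1-q^{-1})^2}$ term on degree grounds. The only cosmetic difference is that the paper rewrites $\frac{1}{(1-q^{-1})^2}=\frac{1}{1-(2q^{-1}-q^{-2})}$ and expands as a geometric series before discarding, whereas you argue directly that the power series of that term begins at degree $-2N-2$; both justifications are equivalent.
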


\begin{proof}
\begin{eqnarray}
\left(\{2N\}!\right)^2 	& \stackrel{\cdot 2N+1}{=}& \left((-1)^Nq^{\frac{3N^2+N}{4}}\{N\}!\left(1 -\frac{q^{-N-1}}{1-q^{-1}}\right)\right)^2 \nonumber \\
				& =&q^{\frac{3N^2+N}{2}} \{N\}!^2 \left(1-\frac{2q^{-N-1}}{1-q^{-1}}+\frac{q^{-2N-2}}{(1-q^{-1})^2}\right) \nonumber \\
				& =&q^{\frac{3N^2+N}{2}}\{N\}!^2 \left(1-\frac{2q^{-N-1}}{1-q^{-1}}+\frac{q^{-2N-2}}{1-(2q^{-1}-q^{-2})}\right) \nonumber \\
				& =&q^{\frac{3N^2+N}{2}} \{N\}!^2 \left(1-\frac{2q^{-N-1}}{1-q^{-1}}+\underbrace{q^{-2N-2}(1+(2q^{-1}-q^{-2})+\cdots)}_{\text{no effect on first $2N+1$ terms}}\right) \nonumber \\	
				& \stackrel{\cdot 2N+1}{=} &q^{\frac{3N^2+N}{2}} \{N\}!^2 \left(1-\frac{2q^{-N-1}}{1-q^{-1}}\right).			
\end{eqnarray}
This concludes the proof.
\end{proof}
\subsection{Knots with $m_i>2$}

In the case where each $m_i$ is greater than 2, the maximum degree decreases by more than $2N$ when we decrease $j_i$ from $N$ to $N-1$, thus we only need to deal with the case where each $j_i=N$. Thus we get
\begin{eqnarray}
J_{N+1,K}(q)&=&\sum_{j_i=0}^N\prod_{i=1}^3 \gamma(N,N,2j_i)^{m_i}\frac{\Delta_{2j_i}}{\theta(N,N,2j_i)}\Gamma_{N,(j_1,j_2,j_3)}\nonumber \\
		&\stackrel{\cdot 2N+1}{=}&\prod_{i=1}^3 \gamma(N,N,2N)^{m_i}\frac{\Delta_{2N}}{\theta(N,N,2N)}\Gamma_{N,(N, N, N)}\nonumber \\
		&=&\gamma(N,N,2N)^{m_1+m_2+m_3}\left(\frac{\Delta_{2N}}{\theta(N,N,2N)}\right)^3\left(\Gamma_{N, (N, N, N)}\right).
\end{eqnarray}
Recall that $\gamma(a, b, c)=(-1)^{\frac{a+b-c}{2}}A^{a+b+c+\frac{a^2+b^2-c^2}{2}}$.  This just has the effect of shifting polynomial but does not affect the sequence of coefficients. Also
\begin{equation}\Delta_n=\frac{(-1)^n(a^{n+1}-a^{-(n+1)})}{a-a^{-1}}=\frac{(-1)^n\{n+1\}}{\{1\}}
\end{equation}
and 

\begin{equation}\Delta_n!=\Delta_n \Delta_{n-1}\cdots \Delta_1 = (-1)^{\frac{n(n+1)}{2}}\frac{\{n+1\}!}{\{1\}^{n+1}}.\end{equation}

Following Lickorish \cite{Lic}, we define $\Gamma(x,y,z)$ to be the diagram consisting of $x$ parallel copies of a circle, $y$ parallel copies of a circle and $z$ parallel copies of a circle joined by the $x+y, y+z,$ and $z+x$ idempotents. This is what our knot reduces to, i.e. $\Gamma_{N,(N,N,N)}=\Gamma(N,N,N)$.
\begin{lemma} {\cite{Lic}}

\begin{equation} \Gamma(x,y,z)=\frac{\Delta_{x+y+z}!\Delta_{x-1}!\Delta_{y-1}!\Delta_{z-1}!}{\Delta_{y+z-1}!\Delta_{z+x-1}!\Delta_{x+y-1}!}. 
\end{equation}
\label{Gammalem}
\end{lemma}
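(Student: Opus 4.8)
The plan is to prove the identity by induction on one of the three parameters, say $z$ — this loses no generality, since both sides are manifestly symmetric under permuting $x$, $y$, $z$ — peeling strands off the bundles of $\Gamma(x,y,z)$ using Wenzl's recursion for the Jones--Wenzl idempotent. For the base case $z=0$, the circle carried by the $z$-bundle disappears and the $y{+}z$ and $z{+}x$ idempotents become $\mathrm{JW}_y$ and $\mathrm{JW}_x$, so $\Gamma(x,y,0)$ is the theta network $\theta(x,y,x+y)$. Its third internal color is $(x+y-(x+y))/2=0$, so this theta degenerates: the $\mathrm{JW}_x$ and $\mathrm{JW}_y$ strands just close the $\mathrm{JW}_{x+y}$ bundle into a single loop, giving $\Gamma(x,y,0)=\Delta_{x+y}=(-1)^{x+y}[x+y+1]$. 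On the right-hand side, setting $z=0$ and using the convention $\Delta_{-1}!=1$ telescopes the ratio to $\Delta_{x+y}!/\Delta_{x+y-1}!=\Delta_{x+y}$, which matches.

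For the inductive step one passes from $\Gamma(x,y,z-1)$ to $\Gamma(x,y,z)$ by peeling off the outermost of the $z$ parallel copies of the third circle. That single strand threads through $\mathrm{JW}_{y+z}$ as one of its $y{+}z$ inputs and through $\mathrm{JW}_{z+x}$ as one of its $z{+}x$ inputs, and is otherwise free, so one applies Wenzl's recursion to $\mathrm{JW}_{y+z}$ to isolate it. In the identity term of the recursion the freed strand can be isotoped off and absorbed into $\mathrm{JW}_{z+x}$, producing $\Gamma(x,y,z-1)$ with the two incident idempotents shrunk to sizes $y{+}z{-}1$ and $z{+}x{-}1$; the hook term of the recursion butts a Temperley--Lieb cap against a Jones--Wenzl projector and so contributes nothing. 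Recording the single quantum-integer coefficient produced by the recursion and combining it with the inductive value of $\Gamma(x,y,z-1)$, one uses the telescoping identity $\Delta_m!/\Delta_{m-1}!=\Delta_m$ to verify that everything reassembles into $\Delta_{x+y+z}!\,\Delta_{x-1}!\,\Delta_{y-1}!\,\Delta_{z-1}!\,/\,(\Delta_{y+z-1}!\,\Delta_{z+x-1}!\,\Delta_{x+y-1}!)$. An essentially equivalent route is to treat $\Gamma(x,y,z)$ as a closed trivalent spin network — three of its vertices being the degenerate trivalent vertices $(x,y,x+y)$, $(y,z,y+z)$, $(z,x,z+x)$ — and to evaluate it via the bubble and triangle identities of Kauffman--Lins recoupling theory; this is in fact the derivation given in Lickorish \cite{Lic}, which one may simply cite.

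The main obstacle is the bookkeeping in the inductive step: confirming that the hook term really does vanish rather than leaving a correction, and checking that the sign together with the accumulated ratio of quantum integers assembles into exactly the stated expression and not one of its close relatives. Re-examining the symmetry of the outcome under permutations of $x$, $y$, $z$ at the end provides a cheap and effective sanity check.
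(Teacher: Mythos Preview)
The paper does not actually prove this lemma; it is quoted directly from Lickorish \cite{Lic} and used as a black box. So there is no ``paper's proof'' to compare against beyond the citation. That said, your proposed inductive argument has a genuine gap in the recursion step.

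Your description of the identity term is not quite right, and your claim about the hook term is false. After applying Wenzl's recursion to $\mathrm{JW}_{y+z}$, the outermost $z$-strand is freed \emph{only at that location}; it still runs once through $\mathrm{JW}_{z+x}$. It cannot simply be ``isotoped off and absorbed.'' What you actually obtain is a single strand closed through $\mathrm{JW}_{z+x}$, so you must invoke the partial-trace identity (capping one extreme strand of $\mathrm{JW}_n$ yields $\tfrac{\Delta_n}{\Delta_{n-1}}\mathrm{JW}_{n-1}$); the identity term therefore contributes $\tfrac{\Delta_{z+x}}{\Delta_{z+x-1}}\,\Gamma(x,y,z-1)$, not $\Gamma(x,y,z-1)$ on the nose. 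More seriously, the hook term does \emph{not} place a cap directly against any Jones--Wenzl projector in the remaining diagram: the cap and cup of $e_{y+z-1}$ sit at the old $\mathrm{JW}_{y+z}$ site, and the two strands involved run off along the $z$-arcs to $\mathrm{JW}_{z+x}$ as parallel (not turned-back) inputs. The hook term is nonzero. For instance, with $x=y=z=1$ the identity term alone gives $-[3]^2/[2]$, whereas $\Gamma(1,1,1)=\theta(2,2,2)=-[4][3]/[2]^2$; the discrepancy is exactly the hook contribution $[3]/[2]$, using $[2][3]=[4]+[2]$.

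An induction along these lines can be made to work, but one must evaluate the hook diagram honestly (it collapses the $y$- and $z$-circles into a single loop, producing a smaller closed network) and combine the two contributions into the stated ratio of $\Delta$-factorials. Alternatively, and more directly, observe that $\Gamma(x,y,z)$ is literally the theta net $\theta(y+z,\,z+x,\,x+y)$ with internal colors $(x,y,z)$, and then the formula is immediate from the trihedron evaluation already recorded in the paper; this is essentially what Lickorish does and what your final paragraph alludes to.
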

Also

\begin{equation} \theta(N,N,2N)=\Gamma(N,N,0)=\frac{\Delta_{2N}!\Delta_{N-1}!\Delta_{N-1}!}{\Delta_{N-1}!\Delta_{N-1}!\Delta_{2N-1}!}=\Delta_{2N}.
\end{equation}
So $\left(\frac{\Delta_{2N}}{\theta(N,N,2N)}\right)=1$, and we have:
\begin{eqnarray}
J_{N+1,K}(q)	&\stackrel{\cdot 2N+1}{=}&\gamma(N,N,2N)^{m_1+m_2+m_3}\left(\frac{\Delta_{2N}}{\theta(N,N,2N)}\right)^3\left(\Gamma_{N,(N,N,N)}\right) \nonumber \\
			&\stackrel{\cdot 2N+1}{=}&\Gamma(N,N,N) \nonumber \\
			&=&\frac{\Delta_{3N}!\Delta_{N-1}!\Delta_{N-1}!\Delta_{N-1}!}{\Delta_{2N-1}!\Delta_{2N-1}!\Delta_{2N-1}!} \nonumber \\
			&=&\frac{\Delta_{3N}!\Delta_{N-1}!^3}{\Delta_{2N-1}!^3}  \nonumber \\
			&=&\frac{(-1)^{N}\{3N+1\}!\{N\}!^3}{\{2N\}!^3\{1\}}.
\end{eqnarray}
Again, since we only want the  $2N+1$ terms with highest $q$ degree, we can reduce the $\{3N+1\}!$ term.  We know we have to choose the $-q^{(2N+i)}$ term for each $1\le i \le N+1$. This gives us a shift in degree that we can ignore. It also gives a factor of $(-1)^{N+1}$. We have
\begin{eqnarray}
J_{N+1,K}(q)	&\stackrel{\cdot 2N+1}{=}&\frac{(-1)^{N}\{3N+1\}!\{N\}!^3}{\{2N\}!^3\{1\}} \nonumber \\
			&\stackrel{\cdot 2N+1}{=}&\frac{(-1)^{1}\{2N\}!\{N\}!^3}{\{2N\}!^3\{1\}} \nonumber \\
			&=&\frac{(-1)\{N\}!^3}{\{2N\}!^2\{1\}}.
\end{eqnarray}
By Lemma \ref{divlemma}, we know that if $f(q) \stackrel{\cdot k}{=}g(q)$, then $1/f(q) \stackrel{\cdot k}{=}1/g(q)$. We can reduce the $\{2N\}!^2$ term to its lowest $2N+1$ terms, which we found in Corollary \ref{2n2cor}, again forgetting about the shifting. Thus we have:
\begin{eqnarray}
J_{N+1,K}(q)	&\stackrel{\cdot 2N+1}{=}&\frac{(-1)\{N\}!^3}{\{2N\}!^2\{1\}} \nonumber\\
			&\stackrel{\cdot 2N+1}{=}&\frac{(-1)\{N\}!^3}{\{N\}!^2\left(1-\frac{2q^{-N-1}}{1-q^{-1}}\right)\{1\}} \nonumber \\
			&=&\frac{(-1)\{N\}!}{\left(1-\frac{2q^{-N-1}}{1-q^{-1}}\right)\{1\}}.
\end{eqnarray}
We ultimately want the tailneck of the normalized Colored Jones polynomial, so we will divide by $\Delta_N=\frac{(-1)^{N}\{N+1\}}{\{1\}}$ at this point.

\begin{eqnarray}
J'_{N+1,K}(q)	&\stackrel{\cdot 2N+1}{=}&\frac{(-1)\{N\}!}{\left(1-\frac{2q^{-N-1}}{1-q^{-1}}\right)\{1\}}\frac{\{1\}}{(-1)^N\{N+1\}}\nonumber \\
				&=&\frac{(-1)^{N-1}\{N\}!}{\left(1-\frac{2q^{-N+-1}}{1-q^{-1}}\right)\{N+1\}}\nonumber \\
				&=&\frac{(-1)^{N-1}\{N\}!}{-q^{(N+1)/2}\left(1-\frac{2q^{-N-1}}{1-q^{-1}}\right)\left(1-q^{-N-1}\right)}\nonumber \\
				&\stackrel{\cdot \infty}{=}&\frac{(-1)^N\{N\}!}{1-\frac{2q^{-N-1}}{1-q^{1}}-q^{-N-1}+\underbrace{\frac{2q^{-2N-2}}{1-q^{-1}}}_{\text{does not contribute}}}.
\end{eqnarray}
Now we do a power series expansion of the denominator. Since we only want the highest $2N+1$ terms, we can ignore most of the terms in the expansion.

\begin{eqnarray}
J'_{N+1,K}(q)	&\stackrel{\cdot 2N+1}{=}&\frac{(-1)^N\{N\}!}{1-\left(\frac{2q^{-N-1}}{1-q^{-1}}+q^{-N-1}\right)} \nonumber \\
				&=&(-1)^N\{N\}!\Bigg(1+\left(\frac{2q^{-N-1}}{1-q^{-1}}+q^{-N-1}\right) \nonumber \\
				&&+\underbrace{\left(\frac{2q^{-N-1}}{1-q^{-1}}+q^{-N-1}\right)^2+\cdots}_{\text{do not contribute to highest $2N+1$ terms}}\Bigg) \nonumber\\
				&\stackrel{\cdot 2N+1}{=}&(-1)^N\{N\}!\left(1+\frac{2q^{-N-1}}{1-q^{-1}}+q^{-N-1}\right).
\end{eqnarray}
Note: The (maximum) degree of this term is $\sum_{i=1}^{N}\frac{i}{2}=\frac{N^2+N}{4}$ and its coefficient is $1$.

Now we need to subtract off the stabilized head.  Since the reduced graph is a triangle graph, any knot in this family will have the same head as the figure 8 knot. We need the stabilized head so we take the head of the $2N+1$ colored Jones polynomial of $4_1$, which is $\{2N\}!$. By Lemma \ref{2nlemma} we get

\begin{eqnarray}
\text{stabilized head}	&\stackrel{\cdot 2N+1}{=}&\{2N\}!  \nonumber \\
			&\stackrel{\cdot 2N+1}{=}&(-1)^N \{N\}!\left(1-\frac{q^{-N-1}}{1-q^{-1}}\right).
\end{eqnarray}
Note: The (maximum) degree of this term is $\sum_{i=1}^{N}\frac{i}{2}=\frac{N^2+N}{4}$ and its coefficient is $1$. Thus the maximum degree and sign of the highest $2N+1$ coefficients we found above match so we are set to subtract.
\begin{eqnarray}
J'_{N+1,K}(q)-\text{stabilized head}	&\stackrel{\cdot 2N+1}{=}&(-1)^N\{N\}!\Bigg(\left(1+\frac{2q^{-N-1}}{1-q^{-1}}+q^{-N-1}\right) \nonumber\\
&& - \left(1-\frac{q^{-N-1}}{1-q^{-1}}\right)\Bigg) \nonumber \\
								&=&(-1)^N\{N\}!\left(q^{-N-1}+\frac{3q^{-N-1}}{1-q^{-1}}\right) \nonumber \\
								&=& (-1)^N q^{-N-1}\left(\{N\}!+\frac{3\{N\}!}{1-q^{-1}}\right).
\end{eqnarray}

Note that
\begin{eqnarray}
 (-1)^N\{N\}!&=&(-1)^N\prod_{i=1}^N (q^{-i/2}-q^{i/2})\nonumber \\
 &=&(-1)^N q^{1/2+2/2+\cdots+N/2}\prod_{i=1}^N (q^{-i}-1) \nonumber \\
  &=& q^{\frac{N(N+1)}{4}}\prod_{i=1}^N (1-q^{-i})\nonumber \\
  &\stackrel{\cdot \infty}{=}&\prod_{i=1}^N (1-q^{-i}).
 \end{eqnarray}
 
 Thus 
 \begin{eqnarray}
 J'_{N+1,K}(q)-\text{stabilized head} &\stackrel{\cdot 2N+1}{=}& (-1)^N q^{-N-1}\left(\{N\}!+\frac{3\{N\}!}{1-q^{-1}}\right) \nonumber \\
 &\stackrel{\cdot \infty}{=}&\prod_{i=1}^N (1-q^{-i}) +\frac{3\prod_{i=1}^N (1-q^{-i})}{1-q^{-1}}.
 \end{eqnarray}
This tells us that the tailneck, $T_1$, is the pentagonal numbers plus 3 times the partial sum of the pentagonal numbers.

\subsection{When at least one of the $m_i$ is 1}

When we have an $m_i$ which is 1, we need to consider the $j_i=N-1$ term as well as the $j_i=N$ term.  We can only allow this for the $i$ with $m_i=1$ and only one can be $N-1$ at a time.  Thus we need to determine what this term contributes to the highest $2N+1$ terms and then add it once for each of the $m_i=1$. Label the edge that we will allow to be either $N$ or $N-1$ as $j_1$ and thus we label the edges so $m_1=1$

 Because the degree decreases by $N+1$ when $j_1$ decreases from $N$ to $N-1$ we only need to consider the highest $N$ terms of the $j_1=N-1$ graph evaluation. Call the $j_1=N-1, j_2=j_3=N$ summand $S_{N-1, N, N}$.
 \begin{eqnarray}
 S_{N-1, N, N} 	&=&\prod_{i=1}^3\gamma(N,N,2j_i)\frac{\Delta_{2j_i}}{\theta(N,N,2j_i)}\Gamma(N+1, N-1, N-1) \nonumber \\
 			&=&\underbrace{\gamma(N,N,2N)^2\gamma(N,N,2N-2)}_{\text{shift, does not affect coefficients}}\underbrace{\left(\frac{\Delta_{2N}}{\theta(N,N,2N)}\right)^2}_{=1} \nonumber \\
 			&&\cdot\frac{\Delta_{2N-2}}{\theta(N,N,2N-2)}\Gamma(N+1, N-1, N-1) \nonumber \\
 			&\stackrel{\cdot \infty}{=}&\frac{\Delta_{2N-2}}{\theta(N,N,2N-2)}\Gamma(N+1, N-1, N-1).
  \end{eqnarray}
  We know that
   \begin{eqnarray}
   \theta(N,N,2N-2)	&=&\frac{\Delta_{2N-1}!\Delta_{N-2}!\Delta_{N-2}!}{\Delta_{N-1}!\Delta_{N-1}!\Delta_{2N-3}!}\nonumber \\
   			&=&\frac{\Delta_{2N-1}\Delta_{2N-2}}{\Delta_{N-1}^2}.
   \end{eqnarray}

 So
    \begin{eqnarray}
\frac{\Delta_{2N-2}}{\theta(N,N,2N-2)}	&=&\frac{\Delta_{N-1}^2}{\Delta_{2N-1}} \nonumber\\
						&=&\frac{-\{N\}^2}{\{1\}\{2N\}}.
   \end{eqnarray}
   Also by Lemma \ref{Gammalem} and simplifying the $\Delta_i$ in terms of $\{j\}$ we get:
    \begin{eqnarray}
\Gamma(N+1, N-1, N-1)&=&\frac{(-1)^{N-1}\{3N\}!\{N+1\}!\{N-1\}!^2}{\{1\}\{2N-2\}!\{2N\}!^2}.
  \end{eqnarray}
  Thus we have:
  \begin{eqnarray}
 S_{N-1, N, N} 	&\stackrel{\cdot \infty}{=}&\frac{\Delta_{2N-2}}{\theta(N,N,2N-2)}\Gamma(N+1, N-1, N-1) \nonumber \\
 			&=&\frac{-\{N\}^2}{\{1\}\{2N\}}\frac{(-1)^{N-1}\{3N\}!\{N+1\}!\{N-1\}!^2}{\{1\}\{2N-2\}!\{2N\}!^2} \nonumber \\
 			&=&\frac{(-1)^N\{N\}^2\{3N\}!\{N+1\}!\{N-1\}!^2}{\{1\}^2\{2N-2\}!\{2N\}!^2\{2N\}}.
  \end{eqnarray}
  Let's normalize by dividing by $\frac{(-1)^N\{N+1\}}{\{1\}}$.  Let  $\overline{S_{N-1, N, N}} $ represent the normalized term.  Then we only want the highest $N$ terms so we can reduce $\{N+i\}$ to $-q^{N+i}$.  We can ignore the overall shift that this reduction creates.
    \begin{eqnarray}
 \overline{S_{N-1, N, N}} 	&\stackrel{\cdot \infty}{=}&\frac{\{3N\}!\{N\}!^3}{\{1\}\{2N-2\}!\{2N\}!^2\{2N\}} \nonumber \\
 				&\stackrel{\cdot N}{=}&\frac{(-1)^{2N}\{N\}!\{N\}!^3}{\{1\}(-1)^{N-2}\{N\}!(-1)^{2N}\{N\}!^2(q^N)} \nonumber \\
 				&\stackrel{\cdot \infty}{=}&\frac{(-1)^N\{N\}!^4}{\{1\}\{N\}!^3} \nonumber \\
 				&=&\frac{(-1)^N\{N\}!}{\{1\}}\nonumber \\
				 &\stackrel{\cdot \infty}{=}&\frac{\prod_{i=1}^N (1-q^{-i})}{(q^{-1/2}-q^{1/2})}.\nonumber \\
				 &\stackrel{\cdot \infty}{=}&\frac{-\prod_{i=1}^N (1-q^{-i})}{(1-q^{-1})}.
 	\end{eqnarray}
  This gives us a copy of the pentagonal partial sums for each of the $m_i=1$.  Since the sign here is negative and for the other piece the sign was positive these will cancel with the pentagonal partial sums we got from the other piece. This proves Theorem \ref{thm:triangle}, restated below. \\

  \noindent \textbf{Theorem 1.2.} \emph{Let $m$ be the number edges in the checkerboard graph with $m_i$ of 2 or more.   The neck of knots whose reduced checkboard graph is the triangle graph is:
\begin{equation} \prod_{n=1}^\infty(1-q^{-n})+m\frac{\prod_{n=1}^\infty(1-q^{-n})}{1-q^{-1}}, \end{equation}
i.e. the pentagonal numbers plus the $m$ times the partial sum of the pentagonal numbers.}\\

Using Dasbach's suggestion, we can redefine the neck and tailneck of the colored Jones polynomial by subtracting consecutive terms in the colored Jones sequence, shifted so that they have the same maximum/minimum degree,  instead of subtracting off the stabilized head or tail series. This gives us a simpler expression for the higher order stable pieces. For this class of knots, we have the following: \\

  \noindent \textbf{Corollary 1.3.} \emph{Again, let $m$ be the number edges in the checkerboard graph with $m_i$ of 2 or more.  Then we have
 \begin{equation}J'_{N,K}-q^*J'_{N+1,K}\stackrel{\cdot N-1}{=}(1+m-q^{-1})\prod_{n=1}^\infty(1-q^{-n}). \end{equation}}

In particular, the highest $N$ terms cancel and the next $N-1$ terms agree with the expression above. To see this, recall that the head of the colored Jones polynomial for these knots is given by $\prod_{n=0}^\infty(1-q^{-n}).$  For simplicity, call this expression $P$. Then we have that \begin{equation}J'_{N,K}\stackrel{\cdot 2N-1}{=}\textrm{head} + q^{-N}\textrm{neck}.\end{equation}

Thus, 
\begin{align}
 J'_{N,K}-q^*J'_{N+1,K}&\stackrel{\cdot 2N-1}{=}\left(\textrm{P} + q^{-N}P+\frac{q^{-N}mP}{1-q^{-1}}\right)-\left(\textrm{P} + q^{-N-1}P+\frac{q^{-N-1}mP}{1-q^{-1}}\right) \nonumber \\
 &\stackrel{\cdot N-1}{=}q^{-N}P\left(1-q^{-1}+\frac{m(1-q^{-1})}{1-q^{-1}}\right)\nonumber \\
 &\stackrel{\cdot N-1}{=}P(1+m-q^{-1}) \nonumber \\
 &\stackrel{\cdot N-1}{=}(1+m-q^{-1})\prod_{n=1}^\infty(1-q^{-n}).
 \end{align}


We call this polynomial $T^*_1$.  Using the formula given in the appendix, we can use Mathematica to conjecture the polynomials of the next stable sequence $T^*_2$. For the next sequence, we need to consider the case of the twist numbers ($m_i$) being 1, 2 and 3 or more.  Let $\hat{J}_{N,K}$ be the $N$ colored Jones polynomial shifted so that the highest order term is 1. The polynomials given are $q^{(2N+2)}(q^{-1}(\hat{J'}_{N,K}-\hat{J'}_{N+1,K})-(\hat{J'}_{N+1,K}-\hat{J'}_{N+2,K}))/\prod_{i=1}^\infty(1 - q^{-i})$.  They are listed by listing their coefficients under the appropriate power to best show the observed patterns. These have been checked for various values of $N$ but have not yet been proved. 

\begin{table}[h!]
\caption{This table lists the coefficients for the higher order stable sequences of the colored Jones polynomial of knots whose reduced checkerboard graph is a triangle graph.}
\begin{tabular}{|c|ccc|lccccc|}
\hline
Values of $(m_1,m_2,m_3)$  & $T^*_1:$&1 & $q^{-1}$ &$T^*_2:$&1&$q^{-1}$&$q^{-2}$&$q^{-3}$&$q^{-4}$ \\
(up to permutation)&&&&&&&&&\\
\hline
$(1,1,1)$ & &1&-1 && 0&1&-1&-1&1\\
\hline
$(1,1,2)$& &2&-1 &&0&4&-1&-3&1\\
$(1,1,3^+)$&&2&-1 &&-1&4&0&-3&1\\
\hline
$(1,2,2)$&& 3&-1 &&0&7&0&-4&1\\
$(1,2,3^+)$&&3&-1 &&-1&7&1&-4&1\\
$(1,3^+,3^+)$&&3&-1 &&-2&7&2&-4&1\\
\hline
$(2,2,2)$& &4&-1 &&0&10&2&-4&1\\
$(2,2,3^+)$& &4&-1 &&-1&10&3&-4&1\\
$(2,3^+,3^+)$& &4&-1 &&-2&10&4&-4&1\\
$(3^+,3^+,3^+)$& &4&-1 &&-3&10&5&-4&1\\
\hline
\end{tabular}
\end{table}

There seems to be a lot of information encoded in these coefficients.  One thing to notice is that for any knot with three or more twists in a region, the polynomial starts with a constant term.  This is equivalent to having a $T_1$ sequence whose first $N$ terms match those of $J'_{N+1,K}$ instead of having the first $N+1$ terms match. 

In the future, we hope to prove these conjectures and determine what, if any, effect the graph properties have on the coefficients. We also hope to find similar sequences for other families of knots and even higher order stability.

\appendix
\section{Finding the first $3N+1$ coefficients of the $J_{N+1,K}$ of Certain Knots}
In order to study the further stabilization of knots whose B-checkerboard graph is a triangle graph, we present a formula that allows Mathematica to easily find the highest $3N+1$ coefficients of the $N+1$ colored Jones polynomial of these knots. To develop this formula, we again consider which terms will contribute. 

For notational ease, let's consider $J_{N+1,K}$.  We want the highest $3N+1$ coefficients. This gives us the $N+1$ terms that stabilize in the head as well as the next two sequences which stabilize with length $N$.  In the table below, we list out the possible labels (ordered from highest label to lowest) and the decrease of maximum degree from the $(N,N,N)$ labeling.  These are lower bounds on the decrease in maximum degree found by considering the basic decreases and not whether the number of circles is increased or decreased in each case. They are grouped by the number of changes from the $(N,N,N)$ labeling.

 \begin{table}[h!]
 \caption{This table lists out possible labelings and the decrease maximum degree with that labeling.}
\begin{tabular}{|c|c|}
\hline
Labeling (up to permutation) & decrease in maximum degree from $(N,N,N)$ \\
\hline
$(N,N,N)$ & 0\\
\hline
$(N,N,N-1)$&at least $N+1$\\
\hline
$(N,N,N-2)$&at least $2N+1$\\
$(N,N-1,N-1)$&at least $2N+2$\\
\hline
$(N,N,N-3)$&at least $3N-1$\\
$(N,N-1,N-2)$&at least $3N+1$\\
$(N-1,N-1,N-1)$&at least $3N+3$\\
\hline
\end{tabular}
\end{table}

As we continue to decrease the labelings, the maximum degrees continue to decrease and thus the only terms that contribute to the terms we want to find are the ones with the labelings of $(N,N,N), (N,N,N-1), (N,N,N-2), (N,N-1,N-1)$ and $(N,N,N-3)$. 

To find the formula, we need to determine the evaluation of the graphs with these labelings. The case of the graph labeled with $(N,N,N-i)$ works just like the case of $(N,N,N-1)$ explained in Section \ref{trianglegraph}. And thus, the evaluation of $\Gamma_{N,(N,N,N-i)}=\Gamma(N+i,N-i,N-i)$.

\begin{figure}[ht]
\centering
\begin{subfigure}[b]{.4\textwidth}
\centering
\includegraphics[width=\textwidth]{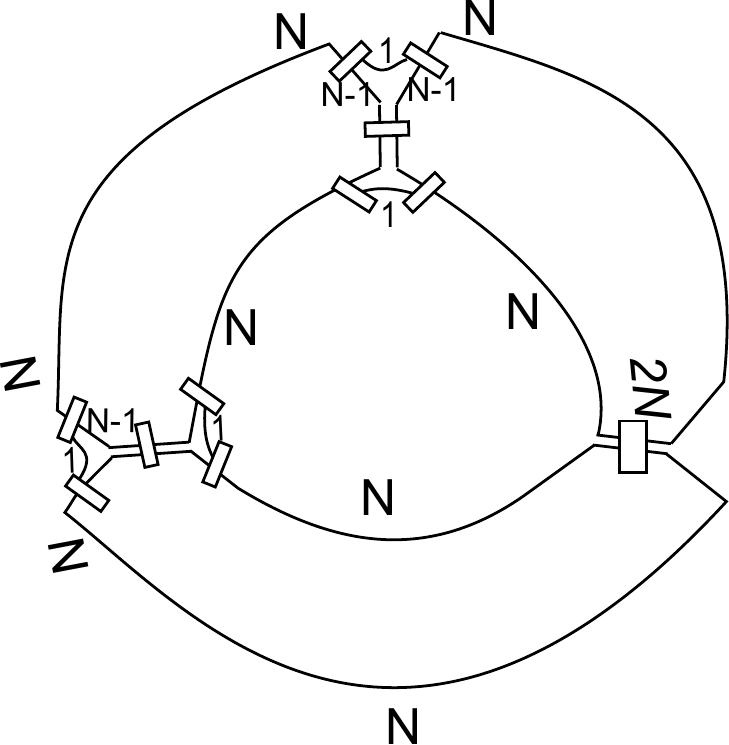}
\caption{The Graph $\Gamma_{N,(N,N-1,N-1)}$}
\end{subfigure}
\qquad
\begin{subfigure}[b]{.4\textwidth}
\centering
\includegraphics[width=\textwidth]{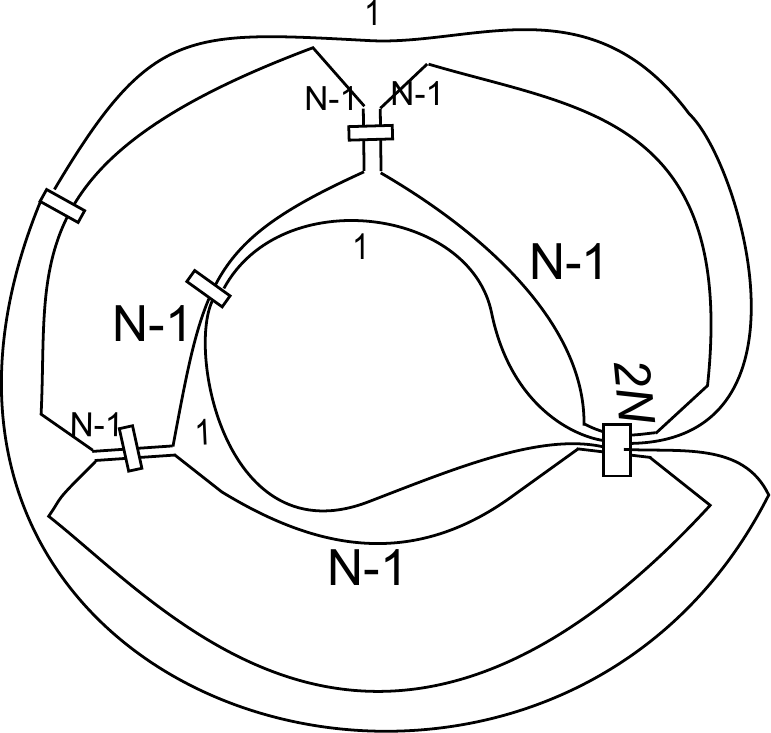}
\caption{We can absorb the smaller idempotents into the larger ones and combine adjoining ones.}
\end{subfigure}

\begin{subfigure}[b]{.4\textwidth}
\centering
\includegraphics[width=\textwidth]{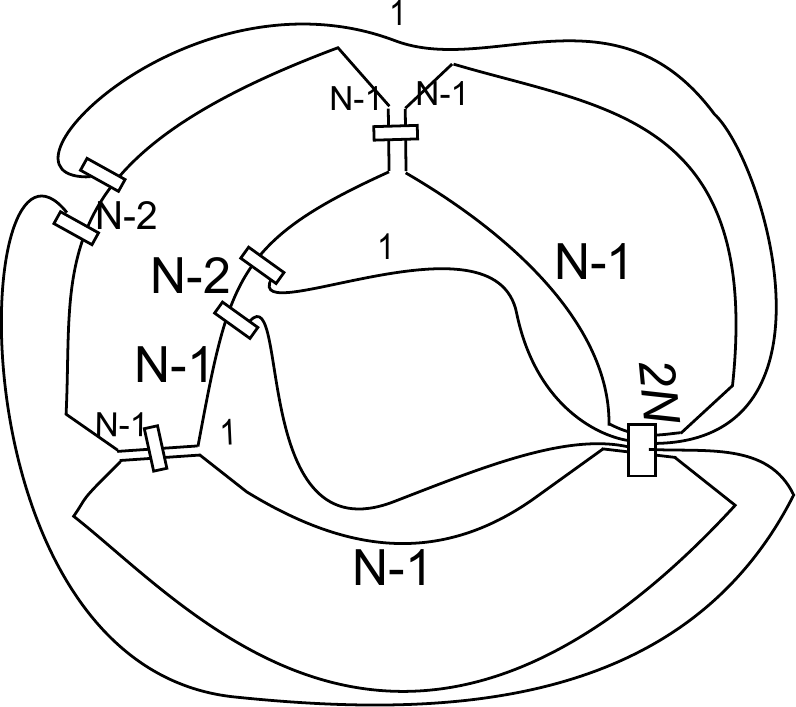}
 \caption{We resolve the remaining $N$ idempotents but only one of the terms is non-zero. We get a factor of $-\left(\frac{\Delta_{N-2}}{\Delta_{N-1}}\right)$ from each resolution. }
\end{subfigure}
\qquad
\begin{subfigure}[b]{.4\textwidth}
\centering
\includegraphics[width=\textwidth]{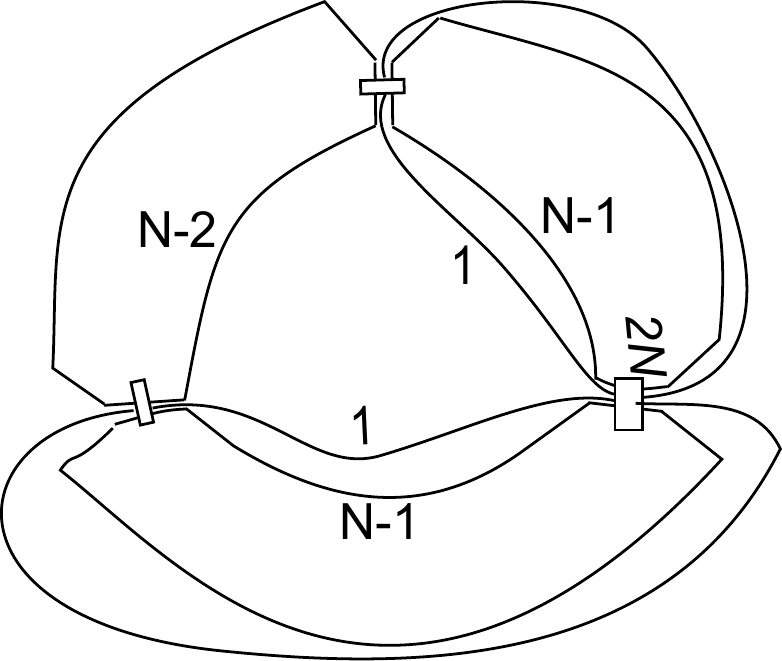}
 \caption{We can then absorb the remaining $N-1$ idempotents.}
\end{subfigure}
 \caption{The evaluation of $\Gamma_{N,(N,N-1,N-1)}=\left(\frac{\Delta_{N-2}}{\Delta_{N-1}}\right)^2\Gamma(N,N,N-2)$}
\label{nnm1nm1reduction}
\end{figure}

The interesting case is the case of $\Gamma_{N,(N,N-1,N-1)}$, see Fig. \ref{nnm1nm1reduction}.  First, we can see that most of the idempotents are absorbed into the larger ones. This leaves four $N$ idempotents which can be combined in pairs. We begin to resolve the remaining two idempotents labeled $N$. The first term in the resolution gives a back track into the $2N$ idempotent for each one. Thus neither of these terms for either idempotent contribute. The other term is non-zero.  We can see that then new $N-1$ labeled idempotents can now be absorbed into the $2N-2$ idempotents. This leaves us with $N-2$ circles going around the top left area and $N$ in each of the other two. We get a $-\frac{\Delta_{N-2}}{\Delta_{N-1}}$ factor from each resolution and thus 
\begin{equation}\Gamma_{N,(N,N-1,N-1)}=\left(\frac{\Delta_{N-2}}{\Delta_{N-1}}\right)^2\Gamma(N,N,N-2).\end{equation}

Now, we can go back to our original expression for the Colored Jones polynomials of knots that reduce to a triangle graph. In particular, 
\begin{equation} J_{N+1,K}=\sum_{j_1,\ldots j_3=0}^{N}\prod_{i=1}^{3}\gamma(N,N,2j_i)^{m_i}\frac{\Delta_{2j_i}}{\theta(N,N,2j_i)}\Gamma_{N,(j_1,\ldots ,j_3)}.\end{equation}
We only need to evaluate the terms indicated above. For each of these terms, we know how to evaluate the graph.  Thus we can evaluate the highest $3N+1$ terms of $J_{N+1,K}(q).$

\bibliographystyle{plain}  
\bibliography{biblio}

\end{document}